\definecolor{shadecolor}{rgb}{1,0.9,0.7}
\newtheorem{theorem}{Theorem}[section]
\newtheorem{lemma}[theorem]{Lemma}
\newtheorem{lemma-definition}[theorem]{Lemma-Definition}
\newtheorem{proposition}[theorem]{Proposition}
\newtheorem{corollary}[theorem]{Corollary}
\theoremstyle{definition}
\newtheorem{remark}[theorem]{Remark}
\numberwithin{equation}{section}
\numberwithin{figure}{section}
\newcommand {\lfor} {\llbracket}
\newcommand {\rfor} {\rrbracket}
\newcommand{\NN} {\mathbb{N}}
\newcommand{\ZZ} {\mathbb{Z}}
\newcommand{\QQ} {\mathbb{Q}}
\newcommand{\RR} {\mathbb{R}}
\newcommand{\CC} {\mathbb{C}}
\newcommand{\PP} {\mathbb{P}}
\renewcommand{\AA} {\mathbb{A}}
\newcommand{\GG} {\mathbb{G}}
\newcommand {\shC}  {\mathcal{C}}
\newcommand {\shD}  {\mathcal{D}}
\newcommand {\shE}  {\mathcal{E}}
\newcommand {\shO}  {\mathcal{O}}
\newcommand {\shR}  {\mathcal{R}}
\newcommand {\shX}  {\mathcal{X}}
\newcommand {\shY}  {\mathcal{Y}}
\newcommand {\foX}  {\mathfrak{X}}
\newcommand {\fob}  {\mathfrak{b}}
\newcommand {\fop}  {\mathfrak{p}}
\newcommand {\fot}  {\mathfrak{t}}
\newcommand {\fou}  {\mathfrak{u}}
\newcommand {\Aff}  {\operatorname{Aff}}
\newcommand {\an}  {\mathrm{an}}
\newcommand {\const} {{\mathrm{const}}}
\newcommand {\dlog} {\operatorname{dlog}}
\newcommand {\gp}  {{\operatorname{gp}}}
\newcommand {\hol}  {\mathrm{hol}}
\newcommand {\Hom}  {\operatorname{Hom}}
\newcommand {\id}  {\operatorname{id}}
\newcommand {\Int}  {\operatorname{Int}}
\newcommand {\liminv} {\varprojlim}
\newcommand {\lra}  {\longrightarrow}
\newcommand {\M} {\mathcal{M}}
\newcommand {\maxid} {\mathfrak{m}}
\newcommand {\NE}  {\operatorname{NE}}
\renewcommand{\O}  {\mathcal{O}}
\newcommand {\ol} {\overline}
\newcommand {\ord}  {\operatorname{ord}}
\newcommand {\out}  {\mathrm{out}}
\renewcommand{\P}  {\mathscr{P}}
\newcommand {\PGL}  {\operatorname{PGL}}
\newcommand {\PL} {\operatorname{PL}}
\newcommand {\Proj} {\operatorname{Proj}}
\newcommand {\scrM}  {\mathscr{M}}
\newcommand {\Spec} {\operatorname{Spec}}
\newcommand {\Spf}  {\operatorname{Spf}}
\newcommand {\trop}  {{\operatorname{trop}}}
\newcommand {\ul} {\underline}
\newcommand{\ptwo}{\mathbb{P}^2}
\def\mydate{\ifcase\month \or January\or February\or March\or
April\or May\or June\or July\or August\or September\or October\or
November\or December\fi \space\number\day,\space\number\year}
\pgfplotsset{
	compat=newest,
	colormap={mycolormap}{color=(lightgray) color=(white) color=(lightgray)}
}
\tikzset{snake it/.style={decorate, decoration=snake}}
\newcommand{\hhh}{{\rm H}}
\newcommand{\ecircq}{\check{E}^\circ_Q}
\newcommand{\diff}{{\rm d}}
\newcommand{\myclr}{magenta!60}
\DeclareFontFamily{U}{mathx}{}
\DeclareFontShape{U}{mathx}{m}{n}{ <-> mathx10 }{}
\DeclareSymbolFont{mathx}{U}{mathx}{m}{n}
\DeclareMathAccent{\widecheck}{0}{mathx}{"71}
\date{\today}
\begin{document}

\title[Intrinsic enumerative mirror symmetry]
{Intrinsic enumerative mirror symmetry: Takahashi's log mirror symmetry for $(\PP^2,E)$ revisited}
\author{Michel van Garrel, Helge Ruddat, Bernd Siebert}

\address{\tiny School of Mathematics, University of Birmingham,
Birmingham B15 2TT, UK}
\email{m.vangarrel@bham.ac.uk}

\address{\tiny
Department of Mathematics and Physics, Univ. Stavanger, P.O. Box 8600 Forus, 4036 Stavanger, Norway}
\email{helge.ruddat@uis.no}

\address{\tiny Department of Mathematics, University of Texas at Austin,
2515 Speedway, Stop C1200, Austin, TX 78712, USA}
\email{siebert@math.utexas.edu}
\thanks{}

\begin{abstract}
Let $E$ be a smooth cubic in the projective plane $\ptwo$. Nobuyoshi Takahashi
formulated a conjecture that expresses counts of rational curves of varying
degree in $\ptwo\setminus E$ as the Taylor coefficients of a particular period
integral of a pencil of affine plane cubics after reparametrizing the pencil
using the exponential of a second period integral.

The intrinsic mirror construction introduced by Mark Gross and the third author
associates to a degeneration of $(\ptwo, E)$ a canonical wall structure from
which one constructs a family of projective plane cubics that is birational to
Takahashi's pencil in its reparametrized form. By computing the period integral
of the positive real locus explicitly, we find that it equals the logarithm of
the product of all asymptotic wall functions. The coefficients of these
asymptotic wall functions are logarithmic Gromov-Witten counts of the central
fiber of the degeneration that agree with the algebraic curve counts in
$(\PP^2,E)$ in question. We conclude that Takahashi's conjecture is a natural
consequence of intrinsic mirror symmetry. Our method generalizes to give similar
results for log Calabi-Yau varieties of arbitrary dimension.
\end{abstract}

\dedicatory{This article is dedicated to Gang Tian, who introduced the third
author to quantum cohomology in 1993 and to so many other things, at the
occasion of his 65th birthday.}

\maketitle
\setcounter{tocdepth}{2}
\tableofcontents



\section{Enumerative mirror symmetry is manifest in intrinsic mirror symmetry}


\subsection{Intrinsic enumerative mirror symmetry}
Mirror symmetry is a prediction originating in string theory
\cite{CLS,GrPl,COGP}. It states that Calabi--Yau manifolds come in \emph{mirror
families}
\[
\mathcal{X}_{\tau,s} \longleftrightarrow \widecheck{\mathcal{X}}_{\sigma,t}
\]
with $\tau,\sigma$ $A$-model symplectic structure parameters
and $s,t$ $B$-model complex structure parameters.

The construction of pairs of mirror families traditionally mostly relied on
toric ad hoc constructions \cite{B,BB,BH}, but can now be done in great
generality in a completely natural fashion via the intrinsic mirror symmetry
construction \cite{GHK,GS18,KY23,GS19,GS22,KY24}.

\emph{Enumerative mirror symmetry}, pioneered in \cite{COGP}, is the conjecture
that the $A$-model variation of symplectic structure of $\mathcal{X}_{\tau,s}$
around a large volume limit point $\tau_0$ is identified with the $B$-model
variation of complex structure of $\widecheck{\mathcal{X}}_{\sigma,t}$ around
the mirror large complex structure limit point $t_0$, possibly after a canonical
identification (\emph{mirror map})
\[
\tau \longleftrightarrow t \,.
\]
A distinguished solution of the $A$-model variation problem is the $A$-model
potential of $\mathcal{X}_{\tau,s}$, a generating function of Gromov--Witten
invariants in the variable $e^\tau$. The analogue in the $B$-model is a
potential function for the Yukawa coupling encoding variations of the Hodge
structure.

We prove a version of the enumerative mirror correspondence from the intrinsic
mirror perspective in the case of the affine Calabi--Yau $\ptwo\setminus E$ for
$E$ a smooth cubic, or rather for the pair $(\PP^2,E)$. One crucial ingredient
in our proof is that the coordinate ring of the intrinsic mirror of a space $Y$
is based on Gromov-Witten invariants of $Y$. It is therefore completely natural
to expect that the periods of the mirror somehow contain enumerative
informations of $Y$. The difficulty of course is how exactly the curve counting
invariants in the definition of the intrinsic mirror influence integrals over
the holomorphic volume form.

One crucial ingredient for this analysis is that the intrinsic mirror
$\widecheck{\mathcal{X}}_{\sigma,t}$ has a positive real locus. The integral of
the holomorphic volume form over this locus readily gives the $B$-model
potential function and can be easily computed in the present case. See also
\cite{AGIS,I1} for some general conjectures that the period integral over the
positive real locus agrees with the Givental $J$-function, a refined $A$-model
potential function, up to terms coming from the Gamma class and after applying
the mirror map.

Another important fact is that intrinsic mirror symmetry is naturally
parametrized in canonical coordinates, so the mirror map is trivial. This was
shown for the case of compact Calabi-Yau manifolds in \cite{RS}. The present
case is treated by a trivial generalization of \cite{RS} from families of
singular cycles to families of singular chains, see \S\ref{Subsect: Canonical
coordinates}. Triviality of the mirror map allows to identify contributions to
the period integral from each individual enumerative invariant.

Taken together, we have shown that intrinsic mirror symmetry gives a transparent
and almost trivial proof of log enumerative mirror symmetry as conjectured by
N.\ Takahashi in \cite{T01}, a highly non-trivial statement. More general
results will appear in future work.

Another interpretation of our results is as the explicit computation of a normal
function associated to the family of fibers in the (fiberwise compactified)
mirror superpotential, an elliptic fibration.


\subsection{Outline and main result}

We proceed as follows:

\subsubsection*{$A$-model and Takahashi's mirror conjecture (\S\ref{sec:tak})}

We count $\AA^1$-curves in $\PP^2\setminus E$ and review Takahashi's mirror
conjecture for these counts. On a technical level, we define $N_d$ as the
maximal tangency genus 0 degree $d\ge 1$ log Gromov--Witten invariant of
$(\ptwo,E)$ counting stable log maps meeting $E$ in a single unspecified point
of tangency $3d$. E.g., $N_1=9$ is the number of lines tangent to one of the
flex points. There are several ways of direct computation of the $N_d$ without
recourse to a mirror family, notably \cite[Expl.2.2]{Ga} or Givental mirror
symmetry in the relative setting \cite{FTY,You}.

\subsubsection*{Degeneration (\S\ref{Subsect: Maximal degeneration})}

The input for the intrinsic mirror construction (relative case) is a log smooth
degeneration $g:\mathcal{Y}\to\AA^1$ of the pair $(\ptwo,E)$ such that the log
scheme $(\shY,\shD)$ has dimension 0 strata. Our model is an explicit family of
hypersurfaces in a weighted projective space.

For uniqueness, the classical theory of plane cubics shows that the moduli stack
$\scrM$ of pairs $(Y,D)$ isomorphic to $\PP^2$ with a smooth plane cubic $E$ is
one-dimensional. Indeed, the $j$-invariant of $E$ provides a finite-to-one map
to $\AA^1$. Our degeneration is a partial compactification of the universal
family over $\scrM$ near the unique maximal degeneration point of $\scrM$, see
Remark~\ref{Rem: Uniqueness of shY}.

\subsubsection*{Intrinsic mirror family (\S\ref{Subsect:
Tropicalization}--\ref{Subsect: Geometry of intrinsic mirror})}

Applying \cite{GS18,GS19,GS22} associates to $g:\shY\to\AA^1$ its intrinsic
mirror family $\shX \to \Spec \CC\lfor t\rfor$. By \cite{CPS}, there is a
sub-family $\shX \supset w^{-1}(1) \to \Spec \CC\lfor t\rfor$ which is the
intrinsic mirror family to the embedded $\mathcal E\subset \mathcal Y$, the
horizontal part of $\shD=\shE\cup \shY_0$; moreover, this mirror family differs
by an explicit base-change from the intrinsic mirror family to $\mathcal E$. The
family turns out to be the formal completion at $t=0$ of a flat analytic family
$\shX_\an\to \CC$ (\S\ref{Subsect: Analytification}). By abuse of notation, we
write $\shX_t$ for the fiber over $t\in\CC$.

\subsubsection*{Analytification and positive real locus (\S\ref{Subsect:
Analytification} and \ref{Subsect: positive real locus})}

Working in the complex-analytic category and restricting to real $t>0$, the
intrinsic mirror family admits a positive real sub-locus $\shX_t^>$ obtained by
restricting the coordinates to lie in $\RR_{>0}$. Topologically,
$\shX_t^>\simeq\RR^2$ and $\shX_t^>\cap w^{-1}(1)$ is a circle. Denote by
$\Gamma_t\subset \shX^>_t$ the Lefschetz thimble, topologically a closed disc,
with $\partial \Gamma_t = \shX_t^>\cap w^{-1}(1)$. Note this Lefschetz thimble
extends to any contractible subset of a sufficiently small pointed disc
$0<|t|<\varepsilon$, but exhibits monodromy about $t=0$. We compute the period
of $\Gamma_t$ over the normalized holomorphic volume form $\Omega_t$ on $\shX_t$
with $t>0$ real and then extend by holomorphic continuation.

\subsubsection*{Canonical coordinates (\S\ref{Subsect: Canonical coordinates})}

Another family of $2$-chains $\beta_t$ with boundary on $w^{-1}(1)\cap\shX_t$ is
of the form constructed in \cite{RS}, and $\int_{\beta_t}\Omega_t$ yields $\log
t$ up to a constant. The intrinsic mirror to the embedding $\mathcal E
\hookrightarrow \mathcal Y$ is the mirror family $w^{-1}(1) \to \Spec \CC\lfor
t\rfor$. Thus the period integral $\int_{\Gamma_t}\Omega_t$ is already in the
correct coordinate $t$.

\subsubsection*{Integral over positive real Lefschetz thimbles
(\S\ref{sec:main-result})}

The main technical result is a computation of $\Omega_t$ over a difference of
real Lefschetz thimbles with boundaries on $w^{-1}(s_0)$ and $w^{-1}(s_1)$
(Proposition~\ref{prop:period}) with a transparent enumerative meaning. We then
use a $\CC^*$-action on the mirror family acting with weight one on both $t$ and
$s$ to compute $\int_{\Gamma_t}\Omega_t$. The same reasoning applies in more
general cases than $(\ptwo,E)$.

\subsubsection*{Main result: Equality of $A$-model and $B$-model potentials (\S\ref{sec:main-result})}

Putting the two period computations together gives the following main theorem.

\begin{theorem}
\label{thm:main}
Intrinsic enumerative mirror symmetry holds for the log smooth degeneration
$g:\shY \to\AA^1$ of $(\ptwo,E)$, namely
\begin{equation}
\label{eq:period-int}
\int_{\Gamma_t}\Omega_t = \frac{1}{2}\cdot\log^2 (t^3) + c +
\sum_{d=1}^\infty N_d \, t^{3d},
\end{equation}
for some constant $c\in \CC$.
\end{theorem}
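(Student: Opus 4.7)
The plan is to assemble three ingredients from the preceding sections: the enumerative difference formula of Proposition~\ref{prop:period}, the $\CC^*$-scaling argument on the mirror family, and the canonical coordinate identity from \S\ref{Subsect: Canonical coordinates}. For $s\in\RR_{>0}$ write $\Gamma_t^s\subset\shX_t^>$ for the positive real thimble with boundary $w^{-1}(s)\cap\shX_t^>$, so that $\Gamma_t=\Gamma_t^1$, and set $I(s,t):=\int_{\Gamma_t^s}\Omega_t$.

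First, by Proposition~\ref{prop:period} the difference $I(s_0,t)-I(s_1,t)$ is given in closed form as the logarithm of a product of asymptotic wall functions. The coefficients of those wall functions are logarithmic Gromov--Witten invariants of the central fibre of $g:\shY\to\AA^1$ which, by the identification recorded in \S\ref{sec:tak}, coincide with the maximal-tangency invariants $N_d$ of $(\PP^2,E)$. This step supplies the enumerative series $\sum_{d\ge 1}N_d\,t^{3d}$ as the $s$-variation of $I(s,t)$, modulo a function of $t$ alone. Next, the $\CC^*$-action on the mirror family with weight one on both $t$ and $s$ transports $\Gamma_t^s$ to $\Gamma_{\lambda t}^{\lambda s}$ and scales $\Omega_t$ in a controlled way, yielding a functional equation that relates $I(s,t)$ along $\CC^*$-orbits. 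Combined with the Step~1 difference formula, this pins down $I(1,t)=\int_{\Gamma_t}\Omega_t$ up to a constant and forces its structure to consist of the enumerative series above together with a purely logarithmic $t$-contribution.

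Finally, the canonical coordinate identity from \S\ref{Subsect: Canonical coordinates} furnishes a second $2$-chain $\beta_t$ of the type constructed in \cite{RS} with $\int_{\beta_t}\Omega_t=\log t+\mathrm{const}$. This identifies $t$ as the canonical coordinate on the base of the mirror family, so the mirror map is trivial and the coefficients $N_d$ appear directly in the $t$-expansion. Moreover, the non-trivial monodromy of $\Gamma_t$ about $t=0$ recorded in \S\ref{Subsect: positive real locus} shifts the period by an integer multiple of $\int_{\beta_t}\Omega_t$, and a short computation shows that the unique multi-valued primitive compatible with this monodromy action matches the quadratic term $\frac{1}{2}\log^2(t^3)$ in \eqref{eq:period-int}, where the factor $t^3$ reflects the fact that the $N_d$ contribute only in degrees $d$ of $t^{3d}$. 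Absorbing the remaining $t$-independent terms into the constant $c$ concludes the identification.

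\textbf{Main obstacle.} The substantive work sits in Proposition~\ref{prop:period}: one must trace the broken-line and wall-function contributions across the positive real locus of the mirror and verify that after cancellation exactly the maximal-tangency log Gromov--Witten invariants $N_d$ survive with the correct multiplicities and exponents $t^{3d}$. Once that enumerative input is in place, the $\CC^*$-scaling argument and the canonical-coordinate identification combine essentially formally to yield \eqref{eq:period-int}.
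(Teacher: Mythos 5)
Your overall route — Proposition~\ref{prop:period}, the $\CC^*$-action, and the canonical-coordinate identification — is the same one the paper uses, but your assembly of the pieces has a genuine gap that your Step~3 tries (and fails) to paper over.

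The key imprecision is in Step~1. You describe the output of Proposition~\ref{prop:period} as ``the logarithm of a product of asymptotic wall functions,'' and claim the $s$-variation of $I(s,t)$ therefore equals the enumerative series \emph{modulo a function of $t$ alone}. That is not what the proposition says: its integrand is $\log t^\kappa - \log s^e + \log f_\out(s,t)$, and the terms $\log t^\kappa$ and $-\log s^e$ are \emph{not} wall functions, nor is $-\log s^e$ a function of $t$ alone. These two terms are precisely where the coefficient $\tfrac{9}{2}$ of $\log^2 t$ comes from. Once you combine the $\CC^*$-invariance $\Pi_2(\lambda s,\lambda t)=\Pi_2(s,t)$ (so $t\partial_t\Pi_2=-s\partial_s\Pi_2$) with Stokes and the \emph{full} statement of Proposition~\ref{prop:period}, you get
\[
t\partial_t\,\Pi_2(1,t)=\log t^\kappa-\log 1^e+\log f_\out(1,t)=9\log t+\sum_{d\ge1}3d\,N_d\,t^{3d},
\]
using $\kappa=e=9$ and \eqref{eq:fout}; one then integrates in $t$ and absorbs the constant. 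No monodromy input is needed. By dropping the $\log t^\kappa-\log s^e$ part you lose exactly the information that determines the $\log^2$ coefficient, and your functional-equation argument in Step~2 can then at best say ``there is some purely logarithmic $t$-contribution,'' not which one.

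Your Step~3 attempts to recover the missing $\tfrac{1}{2}\log^2(t^3)$ from the monodromy of $\Gamma_t$ about $t=0$ relative to $\int_{\beta_t}\Omega_t$. This is a plausible-sounding alternative in the spirit of Picard--Fuchs/monodromy arguments, and it is indeed how Takahashi originally justified the sign (see \S\ref{Subsect: Enumerative mirror conjecture} and the discussion of $(T-\id)^2=0$ in \S\ref{Subsect: Takahashi's mirror family}), but it is not what the paper does and you give no computation: you would need to identify the monodromy action on the relative homology group containing $\Gamma_t$ and $\beta_{1,t}$, compute the shift explicitly, and show it forces coefficient $\tfrac{9}{2}$ rather than merely some quadratic-in-$\log t$ behaviour. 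As it stands, this step is an assertion, not an argument, and it is redundant once Proposition~\ref{prop:period} is used in full. The canonical-coordinate computation in Proposition~\ref{prop:period1} does enter the paper's argument, but only in Proposition~\ref{Prop: Takahashi's conjecture via intrinsic periods} (to match with Takahashi's $q=-t^3$), not in the proof of Theorem~\ref{thm:main} itself.
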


\begin{remark}
The constant $c$ in \eqref{eq:period-int} can easily be computed as in
\cite{AGIS} to be $c=-3\zeta(2)$.
\end{remark}

As a corollary, we deduce Takahashi's enumerative mirror conjecture in
\S\ref{subsec:match}. We emphasize that the proof of Theorem \ref{thm:main} is
by direct and rather trivial computation. Moreover, the matching of symplectic
and Kähler parameters is already built into intrinsic mirror symmetry.

\subsubsection*{Acknowledgment}

{\small
We thank Pierrick Bousseau, Tim Gräfnitz, Nobuyoshi Takahashi, Honglu
Fan and Fenglong You for discussions
related to their respective works. M.\,van
Garrel would like to thank the Isaac Newton Institute for Mathematical Sciences,
Cambridge, for support and hospitality during the programme K-theory, algebraic
cycles and motivic homotopy theory where work on this paper was undertaken. This
work was supported by EPSRC grant no EP/R014604/1. H.\,Ruddat is supported by
the NFR Fripro grant Shape2030. Research by B.\,Siebert was partially supported
by NSF grants DMS-1903437 and DMS-2401174.
}


\section{Takahashi's log mirror symmetry conjecture for $(\mathbb{P}^2,E)$}
\label{sec:tak}

A version of Theorem \ref{thm:main} was conjectured in the work of N.\ Takahashi \cite{T01}, which we review now.


\subsection{$A$-model}
\label{sec:A-model}

Let $\overline{M}_{0,(3d)}(\ptwo,E,d)$ be the moduli space of genus 0 degree $d$
basic stable log maps to $(\ptwo,E)$ meeting $E$ in one unspecified point of
maximal tangency $3d$ \cite{Ch,AC,GS13}. It is of virtual dimension $0$ and
admits a virtual fundamental class $[\overline{M}_{0,(3d)}(\ptwo,E,d)]^{\rm
vir}\in\hhh_{0}\left(\overline{M}_{0,(3d)}(\ptwo,E,d),\QQ\right)$. We obtain the
degree $d$ maximal tangency genus 0 log Gromov--Witten invariants
\[
N_d:=\int_{[\overline{M}_{0,(3d)}(\ptwo,E,d)]^{\rm vir}}1 \in\QQ
\]
virtually counting rational curves in $\ptwo$ that intersect $E$ in exactly one
point.


\subsection{Takahashi's mirror family}
\label{Subsect: Takahashi's mirror family}
We review the setup from \cite{T01}. Takahashi constructs the mirror family from
toric duality principles. The fan of $\ptwo$ is generated by
$(1,0),\,(0,1),\,(-1,-1)\in \ZZ^2\otimes_\ZZ\RR$. Let $\Delta^\circ$ be the
reflexive polygon with vertices $(1,0),\,(0,1),\,(-1,-1)$. Then the associated
toric variety is $\mathbb P_{\Delta^\circ}\cong\ptwo\big/\mu_3$, where
$\lambda\in\mu_3=\{1,e^{\pm2\pi i/3}\}$ acts on homogeneous coordinates by
\[
X\mapsto X, \quad Y \mapsto \lambda \cdot Y, \quad Z \mapsto \lambda^2 \cdot Z,
\]
and the $\mu_3$-invariant sections $X^3,\,Y^3,\,Z^3,XYZ$ form a basis of
$|-K_{\mathbb P_{\Delta^{\circ}}}|$. On the complement $(\CC^*)^2$ of the
coordinate lines in $\PP^2/\mu_3$, the sections restrict to the Laurent
monomials $x,\, y,\, \frac{1}{xy}$ and $1$. According to \cite{B,BB}, the
mirror-dual to $E$ is the anticanonical pencil
\begin{equation}
\label{eq:takfam}
\check{E}_t \, : \, XYZ-t\left(X^3+Y^3+Z^3\right)=0
\end{equation}
whose members are smooth if $t\in \mathbb C\setminus\left(\left\{0\right\}\cup
\frac{1}{3} \, \mu_3\right)$. For $t\in\frac{1}{3}\mu_3$, $\check{E}_t$ is a
singular elliptic curve with a single node whereas $\check{E}_0$ has three
nodes. Takahashi considers the restriction of the pencil \eqref{eq:takfam} to
the torus $(\mathbb C^*)^2\subset\ptwo/\mu_3$,
\begin{equation}
\label{eq:ecirchat}
\check{E}^\circ_t \, : \, t \left( x+y+\frac{1}{xy} \right) = 1\,,
\quad x=\frac{X^3}{XYZ},\ y=\frac{Y^3}{XYZ}\,.
\end{equation}

Multiplying $t$ by a third root of unity leads to an isomorphic fiber. A natural parameter for the family of smooth elliptic curves is therefore
\begin{equation}
\label{eq:pQ}
Q=t^3\in \mathcal{M} := \Big(\mathbb C\setminus\big(\{0\}\cup
\textstyle\frac{1}{3} \, \mu_3\big)\Big)\Big/\mu_3,
\end{equation}
and $\mathcal{M}$ is the moduli space of elliptic curves with
$\Gamma_1(3)$-level structure.\footnote{A conic bundle over the family of
elliptic curves is the mirror to local $\ptwo$, the total space of
$\shO_{\ptwo}(-3)$ over $\ptwo$. This situation has been extensively studied,
see e.g.\ \cite{CKYZ,Gro,AKV,GZ,CLL,CLT,GS14,L}.} Its connection to the present
situation is established in \cite{B22,B23}.

Takahashi considers singular homology $2$-chains in $(\mathbb C^*)^2$ with
boundaries in $\check{E}^\circ_Q$. These chains can be chosen as families
of circles fibering over a path in the $Q$-plane. The path connects a fixed
regular value of $Q$ to the singular value $Q=1/27$, where the circle fibre
contracts to a point. One needs three such paths to obtain a basis of the
relative homology group $\hhh_2\big((\CC^*)^2,\ecircq;\,\ZZ\big)$. To obtain them,
one can descend the three linear paths from the $t$-plane which run from a fixed
choice of cube root of $Q$ to the three points $\frac{1}{3} \, \mu_3$. Projected
to the $Q$-plane, these paths are homotopic up to closed loops with different
winding numbers around the origin. The corresponding relative cycles
$\Gamma^0_Q$, $\Gamma^1_Q$, $\Gamma^2_Q$ are Lefschetz thimbles for the function
$x+y+\frac{1}{xy}$. Their boundary circles in $\check{E}^\circ_Q$ are
$\gamma,T\gamma,T^2\gamma$ where $T$ denotes the monodromy endomorphism of
$\hhh_1\big(\check{E}^\circ_Q,\ZZ\big)$ for a simple loop about $Q=0$. The
identity $(T-\id)^2=0$ gives the generator $\Gamma^0_Q-2\Gamma^1_Q+\Gamma^2_Q$
of the linear relations among the three boundary circles.

Takahashi considers the period integrals
\[
I_{\Gamma^i_Q}(Q):=\int_{\Gamma^i_Q} \frac{\diff x}{x} \wedge \frac{\diff y}{y},
\]
which are multi-valued locally holomorphic functions in $Q\in\mathcal{M}$.
Writing $\theta:=Q\frac{\diff}{\diff Q}$, these periods satisfy the homogeneous
Picard-Fuchs equation\footnote{The same equation with opposite sign $Q\mapsto
-Q$ appears in the context of local mirror symmetry \cite{CKYZ}. On the $A$-side, the corresponding sign appears in the log-local correspondence \cite{vGGR}.}
\begin{equation}
\label{eq:pf}
\left\{\theta^3 - 3 Q \, \theta(3\theta + 1)(3\theta + 2)\right\}I = 0,
\end{equation}
which admits a basis of solutions of the form
\begin{align}
I_0(Q) &= 1, \nonumber \\
I_1(Q) &= \log Q + \sum_{d\geq 1} \, Q^d \, \frac{(3d)!}{d(d!)^3}\, ,
\label{eq:periods} \\
I_2(Q) &= - \frac{1}{2}\log^2 Q + I_1(Q) \log Q + I_2^{\rm hol}(Q)\,  \nonumber
\end{align}
for some unique single valued holomorphic function $I_2^{\rm hol}(Q)$. The
single valued first solution $I_0(Q)$ is a period integral obtained by
integrating $\frac1{(2\pi i)^2}\frac{\diff x}{x} \wedge \frac{\diff y}{y}$ over
a generator of $\hhh_2\big((\CC^*)^2,\ZZ\big)$. Expressing $I_1(Q)=\log Q +
I_1^{\rm hol}(Q)$, an elementary calculation shows that $I_1^{\rm hol}(Q)$ has a
convergence radius of $\frac1{27}$, consistent with the singular fibers'
location in the family.


\subsection{Enumerative Mirror Conjecture}
\label{Subsect: Enumerative mirror conjecture}
Conjecture~1.10 from \cite{T01} together with Remark~1.11 of the same source,
asserts that, using the canonical
coordinate $q$ defined by
\begin{equation}
\label{Eqn: canonical coordinate}
q := -e^{I_1(Q)},
\end{equation}
the following holds
\begin{equation}
\label{eq:takItwo}
I_2(q) = \frac{1}{2}\log^2(-q) + \sum_{d=1}^\infty N_d \, (-q)^d.
\end{equation}
Example 2.2 of \cite{Ga} proves \eqref{eq:takItwo} using recursion relations of
relative Gromov-Witten invariants. We give a completely different proof in this
article that performs an explicit period integral computation in the canonical
scattering diagram and explains the conjecture via the canonical wall structure
of intrinsic mirror symmetry. We will show in Proposition~\ref{prop:period1}
that $q=-t^3$ and therefore Theorem~\ref{thm:main} implies \eqref{eq:takItwo}.

The curious negative sign in $q$ was justified in \cite{T01} through the
monodromy computation of the system \eqref{eq:periods} about $Q=0$.
We will understand the sign from the odd valency of a tropical 1-cycle in an
explicit period integration from \cite{RS}, see Proposition~\ref{prop:period1}
and the proof of Proposition~\ref{Prop: Takahashi's conjecture via intrinsic
periods}.

\begin{remark}
The conjecture originated from explicit computations in \cite{T96} concerning
maps from the affine line into $(\mathbb{P}^2, E)$. Takahashi's full conjecture
predicts a further refinement of the sum in \eqref{eq:takItwo} that was proven
by Bousseau through an isomorphism of scattering diagrams \cite{B22,B23}, combined
with the tropical correspondence from \cite{Gr20}. The enumerative log-local
principle \cite{vGGR} connects the conjecture to the local mirror symmetry of
Calabi-Yau threefolds \cite{CKYZ}. Several works, including
\cite{B20,CvGKT,BBvG,vGNS,BS},
have explored aspects of this refined conjecture.
Its relation to the Givental formalism was studied in \cite{FTY,TY,You}.
\end{remark}


\section{Intrinsic mirror family}
\label{sec:degeneration}

We review the construction of the intrinsic mirror family $\shX \to \Spec
\CC\lfor t\rfor$ following \cite{GS18,GS19,GS22} and explain how it relates to
the construction considered in \cite{CPS,Gr20}.

The intrinsic mirror construction takes as input a simple normal crossings pair
$(Y,D)$ such that $K_Y+D$ is numerically equivalent to an effective
$\QQ$-divisor supported on $D$. In this paper we will only deal with the case
that $D$ is actually anti-canonical. In any case, the construction produces a
flat affine formal scheme $\foX=\Spf R$ over a completed version $\CC\lfor
P\rfor$ of the ring $\CC[\NE(Y)]$ generated by effective curve class on $Y$.
Here $P$ is isomorphic to the set of integral points of a strongly convex
rational polyhedral cone in $\RR^n$ together with a monoid homomorphism
$\NE(Y)\to P$. If $\NE(Y)$ is finitely generated, as in all cases considered in
this paper, one may take $P=\NE(Y)$, but we will ultimately base-change to
$P=\NN$ whose generator will give our degeneration parameter $t$. The reduced
fiber $X_0\subseteq \foX$ is isomorphic to $\Spec \operatorname{SR}(D)$ where
$\operatorname{SR}(D)$ is the Stanley-Reisner ring of the dual intersection
complex of $D$, a simplicial complex. Thus $X_0$ is a union of $\AA^k$'s glued
along intersections of coordinate hyperplanes, with one copy of $\AA^k$ for each
minimal stratum of $D$ of codimension $k$ in $Y$.

If $D$ supports a nef divisor, $R$ is the completion of a finitely generated $\CC \lfor
P\rfor$-algebra \cite[Constr.\,1.24]{GS19}, so one naturally obtains an algebraic family
\[
\shX=\Spec R\lra \Spec\CC \lfor P\rfor.
\]
If $D$ even supports an ample divisor, $R$ is of finite type over
$\CC[P]$ \cite[Rem.\,1.26]{GS19}. Thus in this case, the mirror family is given
by polynomial equations.

To obtain projective families of mirrors, one applies the previous construction
to a normal crossings degeneration $g:\shY\to S$ for some smooth curve $S$. The
intrinsic mirror ring $R$ then comes with a natural $\NN$-grading, and
$\shX=\Proj R$ gives the mirror family.


\subsection{Starting point: A maximal degeneration of $(\PP^2,E)$}
\label{Subsect: Maximal degeneration}

For toric surfaces such as $(\PP^2,D)$ with $D$ the union of coordinate lines,
the intrinsic mirror indeed gives the Hori-Vafa mirror
\[
xyz= t^\ell
\]
where $\ell\in \NE(\PP^2)=\NN$ is the class of a line, a generator. If we
replace $D$ by a smooth elliptic curve $E$, the intrinsic mirror has central
fiber $\AA^1$. This is still an interesting case since the intrinsic mirror
comes with a distinguished basis of regular functions (``generalized theta
functions'') and the structure constants carry enumerative information, see
\cite{Gr22a,W}.

To obtain a family of two-dimensional varieties as a mirror, we rather need to
apply the mirror construction to a simple normal crossings degeneration
$g:(\shY,\shD)\to C$ over a curve $C$ with general fiber $(\PP^2,E)$ and $\shD$
with a zero-dimensional stratum. Thus, $\shD$ is a simple normal crossings
divisor in a smooth $\shY$ surjecting to the smooth curve $C$ and with
$\shY_0=g^{-1}(0)\subseteq \shD$ for a special closed point $0\in C$. The
remaining component of $\shD$ surjects onto $C$ and has general fiber an
elliptic curve. To construct $\shY$, we consider the family $\ol\shY$ of
hypersurfaces
\[
XYZ-s(U+f)=0
\]
in the weighted projective space $\PP(1,1,1,3)$. Here $\deg X=\deg Y=\deg Z=1$,
$\deg U=3$, $f\in\CC[X,Y,Z]$ is a general homogeneous polynomial of degree $3$,
and $s$ is the degeneration parameter. The divisor $\ol\shD\subset\ol \shY$ is
defined by $V(sU)$, so is a union of the central fiber $\ol \shY_0$ and an
irreducible divisor restricting to $U=0$ in each fiber.

This family is easy to understand by viewing $\PP(1,1,1,3)$ as the toric variety
with momentum polytope with vertices $(-1,-1,0)$, $(2,-1,0)$, $(-1,2,0)$,
$(0,0,1)$. This is a tetrahedron with one facet $\sigma$ a standard $2$-simplex
scaled by $3$, and the vertex $v$ not in this face has integral distance~$1$
from this face. The monomial $U$ corresponds to $v$, and $X^3,Y^3,Z^3,XYZ$ to
the vertices and only interior integral point in $\sigma$. For $s\neq 0$ we can
eliminate $U$ to obtain $\PP^2$ with the standard homogenous coordinates
$X,Y,Z$. The divisor $U=0$ leads to the family $XYZ-sf(X,Y,Z)=0$ of smooth cubic
curves.
\begin{figure}[h]
\input{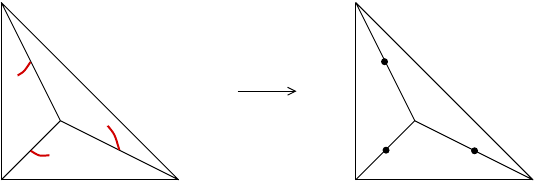_t}
\caption{Central fiber $\shY_0$ of the degeneration $\shY$ of $(\PP^2,E)$}
\label{Fig: Y_0}
\end{figure}

The fiber $\shY_0$ at $s=0$ is sketched in Figure~\ref{Fig: Y_0} on the right.
We have three irreducible components, each a weighted projective plane
$\PP(1,1,3)$, meeting at the point $p=[0,0,0,1]\in \PP(1,1,1,3)$. Since $U+f$ is
not zero at $p$, the total space $\ol\shY$ inherits the toroidal singularity of
$\PP(1,1,1,3)$ at $p$, a $\CC^3/(\ZZ/3)$ quotient singularity, with $\ZZ/3$
acting diagonally by a third root of unity.

The three edges connecting $v$ to $\sigma$ do not have an interior integral
point. This implies that $U+f$ has a simple zero in the interior of each of the
three $\PP^1$'s covering the double locus of $\ol{\shY}_0$. These zeros give three
$A_1$-singularities in $\ol\shY$, locally analytically isomorphic to $uv=s w$
and depicted by the solid circles in Figure~\ref{Fig: Y_0} on the right. These
have small resolutions by locally blowing up one of the two adjacent irreducible
components, see Figure~\ref{Fig: Y_0} on the left for a symmetric choice of
resolution. This procedure replaces the singular points by a $\PP^1$
lying in the blown up irreducible component.

Since the resulting flat family $\shY\to \AA^1$ inherits the toroidal
singularity at $p$ from the ambient $\PP(1,1,1,3)$, it is not simple normal
crossings as required in \cite{GS19,GS22}. We could easily resolve this
singularity torically. However, Johnston in \cite[Rem.\,1.3]{J0} showed that
toroidal singularities work just as well, with an obvious adjustment to the
definition of the tropicalization explained in \S\ref{Subsect: Tropicalization}.
Since the family $\shY$ with the toroidal singularity is also what is discussed
in \cite{CPS,Gr20}, we do not resolve further. We define $\shD\subset\shY$ as
the preimage of $\ol\shD\subset\ol\shY$.

\begin{remark}
\label{Rem: Uniqueness of shY}
From the moduli perspective, one could consider the stack $\scrM$ of flat
families $(\shY,\shD)\to S$ of pairs \'etale locally isomorphic to a smooth
family of plane cubics. This is a smooth Deligne-Mumford stack of dimension one
with coarse moduli space $\AA^1$ and a unique maximal degeneration point.

In fact, the $9$~flex points of a smooth cubic curve are the intersection points
of the remarkable Hesse configuration of $12$~lines. Each of the lines contains
three flex points and each flex point is contained in four lines. The stabilizer
subgroup of $\PGL(3)$ of the Hesse configuration is a finite group $\Gamma$ of
order $216$ acting transitively both on the set of lines and the set of flex
points. Now the embedding of a geometric fiber $E$ of $\shD\to S$ into $\PP^2$
can be fixed by choosing three flex points $p_0,p_1,p_2\in E$ not contained in a
line. Indeed, choosing $p_0$ as the origin of $E$ as an elliptic curve,
$p_1,p_2$ are generators of the $3$-torsion subgroup $E[3]\simeq(\ZZ/3\ZZ)^3$.
There is then a unique embedding $E\to \PP^2$ mapping $p_0,p_1,p_2, p_1+p_2$ to
$[1,0,0],[0,1,0],[0,0,1],[1,1,1]$, respectively.

For a family $(\shY,\shD)\to S$, the set of flex points of the fibers of $\shD\to
S$ are an \'etale cover of $S$ of degree~$9$ which can be assumed trivial after
a finite \'etale cover of $S$. The previous construction then extends to the
whole family to obtain an isomorphism of $(\shY,\shD)$ with a smooth family of
elliptic curves in $\PP^2_S$ with a level~3 structure defined by the chosen
sections $p_0,p_1,p_2$.

Thus $\scrM$ has a finite \'etale cover by the moduli stack of elliptic curves
with (full) level~$3$ structure. The latter moduli space is a scheme whose
universal curve can explicitly be given by the Hesse pencil of plane cubics
\[
\lambda\cdot XYZ+\mu\cdot (X^3+Y^3+Z^3)=0,
\]
minus the four singular members, each a union of three lines, at $\lambda/\mu\in
A=\{0,\omega,\omega^2,1\}$ for $\omega$ a primitive third root of unity. The
base locus of the Hesse pencil is the set of nine flex points of each member, so
the flex points do not depend on $[\lambda,\mu]$. The automorphism group
$\Gamma$ of the Hesse configuration induces an action on the Hesse pencil. The
induced action on the parameter space $\PP^1$ factors over a faithful action of
the alternating group $A_4$ permuting the critical values. See \cite{AD} for a
comprehensive discussion. This shows
\[
\scrM\simeq \big[(\PP^1\setminus A)/\Gamma\big],
\]
and the uniqueness of the maximal degeneration point of $\scrM$.

To make the connection to our family $(\shY,\shD)\to\AA^1$, observe that
after a projective automorphism and rescaling of $s$ we may choose
$f=X^3+Y^3+Z^3$. Then $(\shY,\shD)$ is a partial compactification of the Hesse
pencil of cubics. The previous discussion thus shows that any maximal
degenerating family of plane cubics is locally isomorphic to $(\shY,\shD)$ away
from the central fiber, up to base change.
\end{remark}


\subsection{Dual intersection complexes}
\label{Subsect: Dual intersection complexes}

Recall that a toroidal pair $(Y,D)$ is a variety $Y$ and divisor $D$ that
\'etale locally is isomorphic to a toric variety and its toric boundary divisor.
In both \cite{GS19,GS22}, a central object is the tropicalization
$\Sigma(Y)=\Sigma(Y,D)$ of a toroidal pair, or rather a subcomplex called the
Kontsevich-Soibelman skeleton. The two complexes only differ if $K_Y+D\neq 0$,
so the distinction is irrelevant in the present case and we will only explain
the former. If $Y$ is smooth and $D\subset Y$ is a simple normal crossings
divisors then $\Sigma(Y)$ is the dual intersection cone complex.

The construction of $\Sigma(Y)$ as a conical polyhedral complex with integral
structure for a toroidal pair without self-intersections has been introduced in
\cite[p.71]{KKMS}. It runs as follows. As a matter of notation, we write
$\tau_\ZZ$ for the set of integral points of a rational polyhedral cone $\tau$,
and conversely write $P_\RR$ for the rational polyhedral cone generated by a
finitely generated submonoid $P$ in a lattice. Let $D=D_1+\ldots+D_r$ be the
decomposition into irreducible divisors. For each stratum of $Y$ of
codimension~$k$ along which $(Y,D)$ is locally analytically isomorphic to the
toric variety $\Spec\CC[\tau^\vee_\ZZ]$ with $\tau\subset \RR^n$ a strongly
convex rational polyhedral cone, one adds $\tau$ to $\Sigma(Y)$. An inclusion of
strata induces an inclusion of cones $\tau\to\tau'$, and $\Sigma(Y)$ is the
colimit in the category of integral cone complexes for this diagram of cones.
There is then a one-to-one inclusion-reversing correspondence between cones
$\tau$ in $\Sigma(Y)$ and strata $Y^\circ_\tau$ of $(Y,D)$. We indicate with a
circle superscript that all but the minimal strata are \emph{not} closed in $Y$.
The closure of $Y_\tau^\circ$ is denoted $Y_\tau$ and referred to as a closed
stratum.

Thus $Y$ itself corresponds to the zero-cone, rays in $\Sigma(Y)$ are in
bijection with the irreducible components of $D$, and maximal cones correspond
to the locally minimal strata.

If $D_{i_1},\ldots,D_{i_k}$ are the irreducible components of $D$ containing a
stratum $Y_\tau^\circ$, we can describe $\tau$ intrinsically as follows. Denote
by
\begin{equation}
\label{Eqn: P from divisors}
\textstyle
P_\tau=\big\{\sum_{\mu=1}^k a_\mu D_{i_\mu}\text{ is Cartier \'etale locally
near $Y_\tau^\circ$} \,\big|\, a_\mu \ge0\big\}
\end{equation}
the monoid of effective Cartier divisors near $Y_\tau$ supported on $D$. Then
\[
\tau=\Hom(P_\tau,\RR_{\ge 0}).
\]
This follows immediately since the statement is true for $Y$ the affine toric
variety $\Spec\CC[P]$, $P=\tau^\vee_\ZZ$, and $D$ the complement of the
torus.

If $D=D_1+\ldots+D_r$ is a simple normal crossings divisor, one obtains a simple
global description of $\Sigma(Y)$ as a union of simplicial cones in $\RR^r$:
\[
S=\big\{I\subseteq\{1,\ldots,r\}\,\big|\,
{\textstyle\bigcap_{i\in I} D_i\neq\emptyset\big\}}, \quad
\big|\Sigma(Y)\big|=\bigcup_{I\in S} \sigma_I \subseteq \RR^r.
\]
Here $\sigma_I= \textstyle \sum_{i\in I} \RR_{\ge0}\cdot e_i$ is the cone
spanned by the unit vectors $e_i$ with $i\in I$.

The importance of $\Sigma(Y)$ in the intrinsic mirror constructions from
\cite{GS19,GS22} comes from the fact that its set of integral points
$\Sigma(Y)(\ZZ) =\bigcup_\tau \tau_\ZZ$ is the set of \emph{contact orders with
$D$} of maps from curves $C\to Y$ at points of isolated intersections with $D$.
Indeed, if $f:C\to Y$ is a map from a smooth curve with $f(C)\not\subseteq D$
and $h\in \O_{Y,f(x)}\setminus\{0\}$ fulfills $V(h)\subseteq D$ then
$\ord_x(h)\in \NN$. Thus if $Y_\tau$ is the stratum containing $x$, the discrete
valuation $\nu_x:\O_{C_x}\setminus\{0\}\to \NN$ of $C$ at $x$ composed with
pull-back $f^\sharp$ by $f$ can be evaluated at locally defining functions of
the Cartier divisors supported on $D$ at $f(x)$ to define a map
\[
P_\tau\lra \NN,
\]
that is, an element of $\tau_\ZZ=\Hom (P_\tau,\NN)$.


\subsection{The tropicalization of $(\shY,\shD)$}
\label{Subsect: Tropicalization}
Applying the tropicalization construction to $(\shY,\shD)$ gives three standard
simplicial cones $\sigma_1,\sigma_2,\sigma_3=\RR_{\ge0}^3$ for the three normal
crossings points on $\shY_0$ depicted as the vertices at the boundary of the
large triangles in Figure~\ref{Fig: Y_0}, while a local toric computation at
the remaining zero-dimensional stratum of $\shY_0$ gives
\[
\sigma_0\simeq\RR_{\ge0}\cdot(-1,0,1)+\RR_{\ge0}\cdot(0,-1,1)
+\RR_{\ge0}\cdot(1,1,1),
\]
a non-standard simplicial cone. The projection $\shY\to \AA^1$ tropicalizes to a
map of cone complexes
\begin{equation}
\label{Eqn: Sigma(shY) -> RR+}
g:\big|\Sigma(\shY)\big|\lra \big|\Sigma(\AA^1)\big|=\RR_{\ge0},
\end{equation}
allowing to picture $\Sigma(\shY)$ by the induced polyhedral decomposition $\P$
of $g^{-1}(1)$ (Figure~\ref{Fig: Sigma(shY)}). We write $\ol\sigma_i=
\sigma_i\cap g^{-1}(1)$. Note that $g^{-1}(0)$ is the unique ray common to
$\sigma_1,\sigma_2,\sigma_3$, and it is parallel to the unbounded line segments
in $\ol\sigma_1, \ol\sigma_2, \ol\sigma_3$. This ray corresponds to the
component $D_4$ of $\shD$ surjecting to $\AA^1$, the degenerating family of
elliptic curves, hence is generated by the valuation on local coordinate rings
defined by $D_4$. In the description of $P_{\sigma_i}$ in \eqref{Eqn: P from
divisors}, this valuation maps $\sum_{\mu=1}^k a_\mu D_{i_\mu}$ to the
coefficient of $D_4$.
\begin{figure}[h]
\input{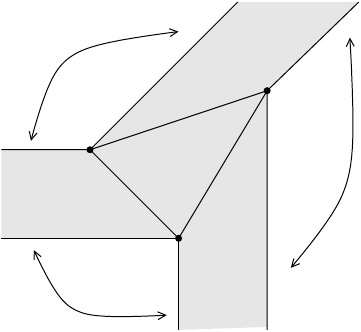_t}
\caption{The fiber $g^{-1}(1)$ of the tropicalization
$\Sigma(\shY)\to\Sigma(\AA^1)$ of $\shY\to\AA^1$ with the induced polyhedral
decomposition $\P$. The arrows indicate identical line segments, the vertices
$e_1,\ldots,e_3$ are integral generators of the rays of $\sigma_0$.}
\label{Fig: Sigma(shY)}
\end{figure}


\subsection{The integral affine manifold $B$ and asymptotic geometry}
\label{Subsect: Affine structure}

In the case of a toric variety, $\Sigma(Y)$ is the set of cones of a fan, hence
is a manifold with an integral affine structure, a system of charts with
transition functions in $\Aff(\ZZ^n)$. If $(Y,D)$ is not toric, but a simple
normal crossings logarithmic Calabi-Yau variety with a zero-dimensional stratum,
there is still an integral affine structure away from the codimension two
skeleton of $\Sigma(Y)$. The reason is that, in this case, each compact
one-dimensional stratum $C\subseteq D$ is isomorphic to $\PP^1$ with exactly two
zero-dimensional strata \cite[Prop.\,1.3]{GS22}. Thus $C\subset Y$ looks like
the inclusion of the closure of a one-dimensional stratum in a toric variety.
Now the fan structure of a toric variety near a one-codimensional cone can be
reconstructed by intersection numbers of the corresponding toric curve with the
toric Cartier divisors. The same formula provides the extension of affine
structure on the interiors of maximal cones in $\Sigma(Y)$ over the codimension
one cones.

Specifically, if $e_1,\ldots,e_{n-1}$ span a codimension one cone $\rho$ in
$\Sigma(Y)$ then $C=Y_\rho\simeq\PP^1$, and the statement on zero-dimensional
strata on $C$ means there are exactly two maximal cones $\sigma,\sigma'$ in
$\Sigma(Y)$ with $\rho=\sigma\cap\sigma'$. Let $e_n, e'_n$ be the primitive
generators of the rays of $\sigma,\sigma'$ not in $\rho$, and denote by
$D_{i_\alpha}$ the component of $D$ corresponding to $e_\alpha$,
$\alpha=1,\ldots, n-1$. Then there is a unique map $\sigma\cup\sigma'\to \RR^n$ linear on each cone with the property
\begin{equation}
\label{Eqn: affine chart equation}
e_n+e'_n= -\sum_{\alpha=1}^{n-1} (D_{i_\alpha}\cdot C)\cdot e_\alpha
\end{equation}
and which maps $e_1,\ldots,e_n$ to the standard unit vectors. This map defines
the affine chart on $|\Sigma(Y)|$ near $\Int\rho$.

Returning to our maximal degeneration $(\shY,\shD)$ of $(\PP^2,E)$, only the
affine structure on $\Sigma(\shY)$ on the cones $\sigma_1,\sigma_2,\sigma_3$
over the unbounded cells in Figure~\ref{Fig: Sigma(shY)} is relevant for this
article. Let $e_1,e_2,e_3$ be the generators of the rays of $\sigma_0$, the
vertices in Figure~\ref{Fig: Sigma(shY)}, and $e_4$ the remaining ray generator
of $\sigma_1,\sigma_2,\sigma_3$ parallel to the unbounded line segments in
$\ol\sigma_i$, $i=1,2,3$. By symmetry, it suffices to consider the affine
structure in the codimension one cone spanned by $e_2,e_4$. The corresponding
curve $C\subset\shY$ is one of the three $\PP^1$'s sketched as an outer edge in
Figure~\ref{Fig: Y_0}. The components $D_2,D_4$ of $\shD$ corresponding to
$e_2,e_4$ are the component of the central fiber $\shY_0$ containing $C$ and the
horizontal divisor $D_4$ already mentioned above. From $D_1+D_2+D_3=\shY_0$, we
obtain
\[
-D_2\cdot C= (D_1+D_3)\cdot C=2,
\]
while using the symmetry and observing that $D_4\cap \shY_0$ is a degeneration
of a cubic curve shows
\[
D_4\cdot C=\frac{1}{3}\cdot (E\cdot E) = 3.
\]
The affine structure near the cone spanned by $e_2,e_4$ according
to \eqref{Eqn: affine chart equation} is therefore given by
\[
e_1+e_3= -(D_2\cdot C)\cdot e_2-(D_4\cdot C) \cdot e_4= 2e_2 -3 e_4
\]
Choosing $e_2=(0,0,1)$, $e_3=(0,1,1)$, $e_4=(1,0,0)$, we obtain
\[
e_1= -(0,1,1)+2\cdot(0,0,1)- 3\cdot(1,0,0)= (-3,-1,1).
\]
The resulting affine geometry on the union of unbounded cells in
Figure~\ref{Fig: Sigma(shY)} is depicted in Figure~\ref{Fig: Asymptotic
geometry}.
\begin{figure}[h]
\input{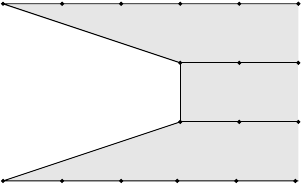_t}
\caption{Asymptotic affine geometry on $B=g^{-1}(1)\subset
\big|\Sigma(\shY)\big|$. The polyhedra $\overline{\sigma}_i$ extend infinitely
to the right. Shown is an affine chart covering $B\setminus \ol\sigma_0$ with
both rays on the top and the bottom mapping onto $\ol\sigma_1\cap\ol\sigma_3$.}
\label{Fig: Asymptotic geometry}
\end{figure}
The vertices are at positions $(-3,-1),(0,0),(0,1),(-3,2)$. The diagram could be
periodically continued vertically to yield a chart on the universal cover of
$B\setminus\ol\sigma_0$ via powers of the affine coordinate transformation
$x\mapsto A\cdot x+b$ with
\begin{equation}
\label{Eqn: Total affine monodromy}
A=\left(\begin{matrix}1&-9\\0&1\end{matrix}\right),\quad
b=\left(\begin{matrix}-9\\3\end{matrix}\right)
\end{equation}
Note that this affine structure does not depend on how we resolved $\ol \shY$.
That choice does, however, influence the affine structure along the facets of
the central cone $\sigma_0$. Figure~\ref{Fig: Sigma(shY)} shows the affine
structure induced by the resolution in Figure~\ref{Fig: Y_0} in a neighborhood
(shaded gray) of the interiors of the three facets of $\ol\sigma_0$. We skip the
computation since only the affine structure on the union of unbounded cells is
relevant to our period computations.

We denote $B=g^{-1}(1)\subset \big|\Sigma(\shY)\big|$ with its integral affine
structure away from the vertices of the polyhedral decomposition $\P$ induced by
$\Sigma(\shY)$, hence with maximal cones $\ol\sigma_0,\ldots,\ol\sigma_3$. Note
that the union of unbounded cells $\ol\sigma_1,\ol\sigma_2,\ol\sigma_3$ of $\P$
has the topology of $S^1\times\RR_{\ge 0}$. The matrix $A$ in \eqref{Eqn: Total
affine monodromy} is the linear part of the affine monodromy given by parallel
transport of integral tangent vectors along the generator of
$\pi_1(S^1\times\RR_{\ge0})$.


\subsection{Wall structures and intrinsic mirror families}
\label{Subsect: Intrinsic mirror construction}

To simplify the following discussion of the intrinsic mirror
construction, we assume $D$ supports a nef divisor. In \cite{GS19}, the
intrinsic mirror ring of $(\shY,\shD)$ is then defined directly as the
free $\CC\lfor P\rfor$-module
\begin{equation}
\label{Eqn: R as a module}
R=\bigoplus_{p\in\Sigma(\shY)(\ZZ)} \CC\lfor P\rfor\cdot \vartheta_p
\end{equation}
with one module generator for each contact order $p$, and multiplication
\begin{equation}
\label{Eqn: multiplication of theta functions}
\vartheta_p\cdot\vartheta_q=\sum_{r\in \Sigma(\shY)(\ZZ)}
\sum_{A\in P} N_{pqr}^A \vartheta_r t^A.
\end{equation}
The structure coefficients $N_{pqr}^A\in\QQ$ are logarithmic Gromov-Witten
counts of genus zero stable log maps with curve class $A$ and contact orders
$p,q$ and $-r$ with $\shD$. The negative contact order $-r$ is implemented by
the theory of punctured logarithmic maps \cite{ACGS}. If $\tau$ is the minimal
cone containing $r$ then the irreducible component containing the marked point
of the domain of a punctured stable map with contact order $-r$ contributing to
$N_{pqr}^A$ maps into the closed stratum $\shY_\tau$. The main result of
\cite{GS19} is that \eqref{Eqn: multiplication of theta functions} defines a
ring structure on $R$, and in particular is associative. For the grading of $R$
one takes $\deg\vartheta_p$ as the integral length of $g(p)$ for
$g:|\Sigma(\shY)|\to \RR_{\ge0}$ as in \eqref{Eqn: Sigma(shY) -> RR+}, the
contact order with the central fiber $\shY_0$.

Better control of intrinsic mirrors, at the expense of slightly more restrictive
assumptions, which hold in all cases relevant to mirror symmetry, is provided by
the alternative construction of $R$ in \cite{GS22}. This approach employs the
previously developed machinery of wall structures and generalized theta
functions \cite{GS11, GPS, GHK, GHS}. A wall $\fop$ in this setup is a
codimension one rational polyhedral subset of $|\Sigma(\shY)|$ contained in one
cone $\sigma$ of $\Sigma(\shY)$ along with an element $f_\fop$ of the Laurent
polynomial ring $\CC\lfor P\rfor [\fop_\ZZ^\gp]$ with exponents integral tangent
vector fields on $\fop$ and coefficients in $\CC\lfor P\rfor$, the completed
ring of curve classes. The construction requires $f_\fop$ to be homogeneous of
degree $0$, so all exponents are contracted by $g|_\sigma$, and $f_\fop\equiv 1$
modulo the maximal ideal $\maxid=(P\setminus\{0\})$ of $\CC\lfor P\rfor$.

Walls are naturally swept out by the endpoints of universal families of
tropicalizations of punctured maps of a given type with only one puncture if the
family of endpoints happens to be one-codimensional. Each type
$\fot$ of such tropical curves (\emph{wall types}) and curve classes $A\in P$
yields a punctured invariant $W_{\fot,A}$ and associated wall $\fop_{\fot,A}$
with
\begin{equation}
\label{Eqn: Canonical wall function}
f_{\fop_{\fot,A}}=\exp\left(k_\fot W_{\fot,A} t^A z^{-u_\fot}\right),
\end{equation}
see \cite[(3.9)--(3.11)]{GS22}. Here $u_\fot\in \sigma_\ZZ\subseteq
\Sigma(\shY)(\ZZ)$ is the contact order at the puncture, $\sigma$ is the
smallest cone containing $\fop$, and $k_\tau\in\NN\setminus\{0\}$ is a tropical
multiplicity. The set of all such walls defines the \emph{canonical wall
structure} of \cite{GS22}. Figure~\ref{fig:univcov} shows the canonical wall
structure of $(\shY,\shD)$ in the asymptotic chart of Figure~\ref{Fig:
Asymptotic geometry} up to curves of degree $3$.
\begin{figure}[h!]
\centering
\begin{tikzpicture}[scale=0.85,define
rgb/.code={\definecolor{mycolor}{RGB}{#1}}, rgb color/.style={define
rgb={#1},mycolor}]]
\begin{scope}
\clip (-5,-3) rectangle (12,4);
\draw[->,color=\myclr] (-9.0, -2.0) -- (-11.25, -2.5);
\draw[->,color=\myclr] (-9.0, 3.0) -- (-11.25, 3.5);
\draw[->,color=\myclr] (-6.0, -1.5) -- (-12.0, -2.5);
\draw[->,color=\myclr] (-6.0, 2.5) -- (-12.0, 3.5);
\draw[->,color=\myclr] (-3.0, -1.0) -- (-5.25, -2.5);
\draw[->,color=\myclr] (-6.0, -2.0) -- (-6.75, -2.5);
\draw[->,color=\myclr] (-1.5, -0.5) -- (-7.5, -2.5);
\draw[->,color=\myclr] (-9.0, -2.0) -- (-10.5, -2.5);
\draw[->,color=\myclr] (-1.5, 1.5) -- (-7.5, 3.5);
\draw[->,color=\myclr] (-9.0, 3.0) -- (-10.5, 3.5);
\draw[->,color=\myclr] (-3.0, 2.0) -- (-5.25, 3.5);
\draw[->,color=\myclr] (-6.0, 3.0) -- (-6.75, 3.5);
\draw[color=\myclr] (0.0, 0.5) -- (0.0, -2);
\draw[->,color=\myclr] (-6.0, -2.0) -- (-6.0, -2.5);
\draw[->,color=\myclr] (-3.0, -1.0) -- (-3.0, -2.5);
\draw[->,color=\myclr] (-9.0, -2.0) -- (-9.0, -2.5);
\draw[color=\myclr] (0.0, 0.5) -- (0.0, 3);
\draw[->,color=\myclr] (-6.0, 3.0) -- (-6.0, 3.5);
\draw[->,color=\myclr] (-3.0, 2.0) -- (-3.0, 3.5);
\draw[->,color=\myclr] (-9.0, 3.0) -- (-9.0, 3.5);
\draw[->,color=\myclr] (0.0, 1.0) -- (18.0, 1.0);
\draw[->,color=\myclr] (-3.0, 2.0) -- (18.0, 2.0);
\draw[->,color=\myclr] (-9.0, 3.0) -- (18.0, 3.0);
\draw[->,color=\myclr] (0.0, 0.0) -- (18.0, 0.0);
\draw[->,color=\myclr] (-3.0, -1.0) -- (18.0, -1.0);
\draw[->,color=\myclr] (-9.0, -2.0) -- (18.0, -2.0);
\draw[->,color=\myclr] (-9.0, -2.0) -- (18.0, -2.0);
\draw[->,color=\myclr] (-9.0, 3.0) -- (18.0, 3.0);
\draw[->,color=\myclr] (-3.0, -1.0) -- (18.0, -1.0);
\draw[->,color=\myclr] (0.0, 1.0) -- (18.0, 1.0);
\draw[->,color=\myclr] (0.0, 0.0) -- (18.0, 0.0);
\draw[->,color=\myclr] (-3.0, 2.0) -- (18.0, 2.0);
\draw[->,color=\myclr] (0.0, 1.5) -- (18.0, 1.5);
\draw[->,color=\myclr] (-4.5, 2.5) -- (18.0, 2.5);
\draw[->,color=\myclr] (1.5, 0.5) -- (18.0, 0.5);
\draw[->,color=\myclr] (0.0, -0.5) -- (18.0, -0.5);
\draw[->,color=\myclr] (-4.5, -1.5) -- (18.0, -1.5);
\draw[->,color=\myclr] (-9.0, 3.0) -- (18.0, 3.0);
\draw[->,color=\myclr] (0.0, 1.0) -- (18.0, 1.0);
\draw[->,color=\myclr] (-3.0, 2.0) -- (18.0, 2.0);
\draw[->,color=\myclr] (-3.0, -1.0) -- (18.0, -1.0);
\draw[->,color=\myclr] (-9.0, -2.0) -- (18.0, -2.0);
\draw[->,color=\myclr] (0.0, 0.0) -- (18.0, 0.0);
\draw[->,color=\myclr] (-6.0, -2.0) -- (18.0, -2.0);
\draw[->,color=\myclr] (0.0, -1.0) -- (18.0, -1.0);
\draw[->,color=\myclr] (3.0, 1.0) -- (18.0, 1.0);
\draw[->,color=\myclr] (3.0, 0.0) -- (18.0, 0.0);
\draw[->,color=\myclr] (-6.0, 3.0) -- (18.0, 3.0);
\draw[->,color=\myclr] (0.0, 2.0) -- (18.0, 2.0);
\draw[->,color=\myclr] (-5.0, -1.667) -- (18.0, -1.667);
\draw[->,color=\myclr] (0.0, -0.667) -- (18.0, -0.667);
\draw[->,color=\myclr] (0.0, 1.667) -- (18.0, 1.667);
\draw[->,color=\myclr] (-5.0, 2.667) -- (18.0, 2.667);
\draw[->,color=\myclr] (1.0, 1.333) -- (18.0, 1.333);
\draw[->,color=\myclr] (1.0, -0.333) -- (18.0, -0.333);
\draw[->,color=\myclr] (2.0, 0.667) -- (18.0, 0.667);
\draw[->,color=\myclr] (-3.0, -1.333) -- (18.0, -1.333);
\draw[->,color=\myclr] (2.0, 0.333) -- (18.0, 0.333);
\draw[->,color=\myclr] (-3.0, 2.333) -- (18.0, 2.333);
\draw[->,color=\myclr] (0.0, 0.0) -- (3.75, -2.5);
\draw[->,color=\myclr] (0.0, -1.0) -- (2.25, -2.5);
\draw[->,color=\myclr] (-1.5, 1.5) -- (10.5, -2.5);
\draw[->,color=\myclr] (0.0, -1.0) -- (4.5, -2.5);
\draw[->,color=\myclr] (0.0, 0.0) -- (7.5, -2.5);
\draw[->,color=\myclr] (-3.0, -1.0) -- (1.5, -2.5);
\draw[->,color=\myclr] (-0.0, -2.0) -- (1.5, -2.5);
\draw[->,color=\myclr] (-6.0, -2.0) -- (-4.5, -2.5);
\draw[->,color=\myclr] (-4.5, -1.5) -- (-1.5, -2.5);
\draw[->,color=\myclr] (-3.0, -2.0) -- (-1.5, -2.5);
\draw[->,color=\myclr] (-1.5, -0.5) -- (10.5, 3.5);
\draw[->,color=\myclr] (0.0, 2.0) -- (4.5, 3.5);
\draw[->,color=\myclr] (0.0, 1.0) -- (7.5, 3.5);
\draw[->,color=\myclr] (-3.0, 2.0) -- (1.5, 3.5);
\draw[->,color=\myclr] (-0.0, 3.0) -- (1.5, 3.5);
\draw[->,color=\myclr] (-6.0, 3.0) -- (-4.5, 3.5);
\draw[->,color=\myclr] (-4.5, 2.5) -- (-1.5, 3.5);
\draw[->,color=\myclr] (-3.0, 3.0) -- (-1.5, 3.5);
\draw[->,color=\myclr] (0.0, 1.0) -- (3.75, 3.5);
\draw[->,color=\myclr] (0.0, 2.0) -- (2.25, 3.5);
\draw[->,color=\myclr] (-6.0, 2.5) -- (18.0, -1.5);
\draw[->,color=\myclr] (3.0, -0.0) -- (18.0, -2.5);
\draw[->,color=\myclr] (0.0, 1.0) -- (18.0, -2.0);
\draw[->,color=\myclr] (0.0, 0.0) -- (15.0, -2.5);
\draw[->,color=\myclr] (6.0, -1.0) -- (15.0, -2.5);
\draw[->,color=\myclr] (0.0, -1.0) -- (9.0, -2.5);
\draw[->,color=\myclr] (0.0, -0.5) -- (12.0, -2.5);
\draw[->,color=\myclr] (3.0, -1.0) -- (12.0, -2.5);
\draw[->,color=\myclr] (-6.0, -1.5) -- (18.0, 2.5);
\draw[->,color=\myclr] (3.0, 1.0) -- (18.0, 3.5);
\draw[->,color=\myclr] (0.0, 0.0) -- (18.0, 3.0);
\draw[->,color=\myclr] (0.0, 1.0) -- (15.0, 3.5);
\draw[->,color=\myclr] (6.0, 2.0) -- (15.0, 3.5);
\draw[->,color=\myclr] (0.0, 2.0) -- (9.0, 3.5);
\draw[->,color=\myclr] (0.0, 1.5) -- (12.0, 3.5);
\draw[->,color=\myclr] (3.0, 2.0) -- (12.0, 3.5);
\draw[->,color=\myclr] (0.0, 1.0) -- (15.75, -2.5);
\draw[->,color=\myclr] (3.0, 0.0) -- (14.25, -2.5);
\draw[->,color=\myclr] (-13.5, 3.5) -- (18.0, 0.0);
\draw[->,color=\myclr] (-3.0, 2.0) -- (18.0, -0.333);
\draw[->,color=\myclr] (0.0, 1.0) -- (18.0, -1.0);
\draw[->,color=\myclr] (9.0, -0.0) -- (18.0, -1.0);
\draw[->,color=\myclr] (3.0, 1.0) -- (18.0, -0.667);
\draw[->,color=\myclr] (3.0, 0.0) -- (18.0, -1.667);
\draw[->,color=\myclr] (1.5, 0.5) -- (18.0, -1.333);
\draw[->,color=\myclr] (6.0, 0.0) -- (18.0, -1.333);
\draw[->,color=\myclr] (-13.5, -2.5) -- (18.0, 1.0);
\draw[->,color=\myclr] (-3.0, -1.0) -- (18.0, 1.333);
\draw[->,color=\myclr] (0.0, 0.0) -- (18.0, 2.0);
\draw[->,color=\myclr] (9.0, 1.0) -- (18.0, 2.0);
\draw[->,color=\myclr] (3.0, 0.0) -- (18.0, 1.667);
\draw[->,color=\myclr] (3.0, 1.0) -- (18.0, 2.667);
\draw[->,color=\myclr] (1.5, 0.5) -- (18.0, 2.333);
\draw[->,color=\myclr] (6.0, 1.0) -- (18.0, 2.333);
\draw[->,color=\myclr] (0.0, 0.0) -- (15.75, 3.5);
\draw[->,color=\myclr] (3.0, 1.0) -- (14.25, 3.5);
\draw[->,color=\myclr] (-9.0, 3.0) -- (18.0, 0.75);
\draw[->,color=\myclr] (-3.0, 2.0) -- (18.0, 0.25);
\draw[->,color=\myclr] (0.0, 2.0) -- (18.0, 0.5);
\draw[->,color=\myclr] (0.0, 1.5) -- (18.0, 0.0);
\draw[->,color=\myclr] (-9.0, -2.0) -- (18.0, 0.25);
\draw[->,color=\myclr] (-3.0, -1.0) -- (18.0, 0.75);
\draw[->,color=\myclr] (0.0, -1.0) -- (18.0, 0.5);
\draw[->,color=\myclr] (0.0, -0.5) -- (18.0, 1.0);
\draw[->,color=\myclr] (-3.0, 2.0) -- (18.0, -0.8);
\draw[->,color=\myclr] (-9.0, 3.0) -- (18.0, 1.2);
\draw[->,color=\myclr] (-6.0, 3.0) -- (18.0, 1.4);
\draw[->,color=\myclr] (-4.5, 2.5) -- (18.0, 1.0);
\draw[->,color=\myclr] (-9.0, -2.0) -- (18.0, -0.2);
\draw[->,color=\myclr] (-6.0, -2.0) -- (18.0, -0.4);
\draw[->,color=\myclr] (-4.5, -1.5) -- (18.0, -0.0);
\draw[->,color=\myclr] (-3.0, -1.0) -- (18.0, 1.8);
\draw[->,color=\myclr] (-9.0, 3.0) -- (18.0, 0.429);
\draw[->,color=\myclr] (-9.0, -2.0) -- (18.0, 0.571);
\draw[->, thick,color=gray] (-9,3) -- (18,3);
\draw[->, thick,color=gray] (-3,2) -- (18,2);
\draw[->, thick,color=gray] (0,1) -- (18,1);
\draw[->, thick,color=gray] (0,0) -- (18,0);
\draw[->, thick,color=gray] (-3,-1) -- (18,-1);
\draw[->, thick,color=gray] (-9,-2) -- (18,-2);
\draw[-, thick,color=gray] (0,0) -- (0,1);
\draw[-, thick,color=gray] (-3,2) -- (0,1);
\draw[-, thick,color=gray] (-3,-1) -- (0,0);
\draw[-, thick,color=gray] (-3,2) -- (-9,3);
\draw[-, thick,color=gray] (-3,-1) -- (-9,-2);
\draw[->,ultra thick,densely dotted,color=blue] (1.5, 0.5) -- (18.0, 0.5);
\draw[ultra thick,densely dotted,color=blue] (0,1) -- (1.5, 0.5) -- (0,0);
\fill[color=blue] (0,0) circle (.06cm);
\fill[color=blue] (0,1) circle (.06cm);
\fill[color=blue] (1.5,.5) circle (.06cm);
\end{scope}
\end{tikzpicture}
\caption{Tropical curves in the chart of Figure~\ref{Fig: Asymptotic geometry}
up to degree 3. In dotted blue a maximally tangent tropical conic contributing
to an unbounded wall. This image was produced by \protect\cite{Gr21}.}
\label{fig:univcov}
\end{figure}

One main result of \cite{GS22} is that the canonical wall structure is
\emph{consistent}, which means that $B$ gives rise to a compatible directed
system of schemes \cite{GHS}. We quickly sketch this construction. We work over
an Artinian quotient $S_I=\CC[P]/I$, $I\subseteq\maxid$ to reduce to finitely
many non-trivial walls. In any case, a wall $\fop$ intersecting the interior of
a maximal cell $\sigma$ induces an $S_I$-algebra isomorphism
\begin{equation}
\label{Eqn: theta_fop}
\theta_\fop: S_I[\sigma^\gp_\ZZ]\lra S_I[\sigma^\gp_\ZZ]
\end{equation}
by splitting $\sigma_\ZZ^\gp =\fop_\ZZ^\gp \oplus\ZZ\cdot \xi$ and defining
\begin{equation}
\label{Eqn: theta_fop on monomials}
\theta_\fop(z^\xi)= f_\fop\cdot z^\xi,\quad
\theta_\fop(z^m)=z^m\text{ for }m\in\fop^\gp_\ZZ.
\end{equation}
The union of the finitely many non-trivial walls subdivide the maximal cell
$\sigma$ into a number of connected components. A \emph{chamber} of the wall
structure is the closure of such a connected component. Taking one copy
$R_\fou=S_I[\sigma_\ZZ^\gp]$ for each chamber $\fou$, and the wall crossing
automorphism $\theta_\fop$ as a morphism $R_\fou\to R_{\fou'}$ for chambers
$\fou,\fou'$ separated by $\fop$, with $\xi$ pointing from $\fou'$ to $\fou$, we
obtain a diagram of rings related by isomorphims. Consistency in codimension
zero says that for each $\fou$ the projection
\[\textstyle
\liminv_{\fou'\subseteq\sigma} R_{\fou'} \lra R_\fou=S_I[\sigma_\ZZ^\gp]
\]
is an isomorphism. In other words, for chambers $\fou,\fou'\subseteq \sigma$ any
chain of wall crossing automorphisms connecting the rings $R_\fou$ and
$R_{\fou'}$ leads to the same isomorphism $R_\fou\to R_{\fou'}$.

If a wall lies in a one-codimensional cone $\rho$ of $\Sigma(\shY)$, which is
then called \emph{a slab} and denoted with the symbol $\fob$ rather than $\fop$
for distinction, one defines a ring
\begin{equation}
\label{Eqn: R_fob}
R^I_\fob=S_I[\rho_\ZZ^\gp][u,v]\big/\big(uv-f_\fob\cdot t^{[\shY_\rho]}\big).
\end{equation}
Here we use that the closed stratum $\shY_\rho$ is one-dimensional, hence has an
associated class $[\shY_\rho]\in P$. If $\sigma,\sigma'$ are the maximal cones
in $\Sigma(\shY)$ containing $\rho$ let $\xi\in (\sigma-\rho)_\ZZ$ be contracted
by $g$ and generate a complementary summand to $\rho_\ZZ^\gp$ in
$\sigma_\ZZ^\gp$. Mapping $u$ to $z^\xi$ and $v$ to $f_\fob t^{[\shY_\rho]}
z^{-\xi}$ induces an isomorphism of the localization
\begin{equation}
\label{Eqn: R_fob -> Laurent}
(R^I_\fob)_u\simeq S_I[\sigma_\ZZ^\gp].
\end{equation}
For the analogous isomorphism involving $\sigma'$, we use $\xi'=-\xi$ via an
affine chart for $\Sigma(\shY)$ containing $\Int\rho$ and map $u, v$ to $f_\fob
t^{[\shY_\rho]} z^{-\xi'}$, $z^{\xi'}$, respectively, to obtain
\begin{equation}
\label{Eqn: R_fob -> Laurent, other side}
(R^I_\fob)_v\simeq S_I[{\sigma'}_\ZZ^\gp].
\end{equation}
Observe that different intersection numbers in the equation \eqref{Eqn: affine
chart equation} defining the affine structure have the effect of multiplying
$f_\fob$ by a monomial. So the affine structure in codimension one is a key
ingredient in the construction.

Taken together one obtains a system of rings with elements
$R_\fou=S_I[\sigma_\ZZ^\gp]$ for chambers $\fou$ in maximal cones $\sigma$ of
$\Sigma(\shY)$, and $R^I_\fob$ for walls $\fob$ contained in codimension one
cells, with arrows the isomorphisms $\theta_\fop$ from \eqref{Eqn: theta_fop}
and localization maps from \eqref{Eqn: R_fob -> Laurent}. Consistency at the
lowest level (``in codimensions zero and one'') says that the cocyle condition
is fulfilled to obtain a scheme $X_I^\circ$ over $\Spec S_I$. This scheme
$X_I^\circ$ is a flat deformation of the complement $X_0^\circ$ of the
codimension two strata of $X_0$ constructed as $\Proj$ of the Stanley-Reisner
ring mentioned at the beginning of \S\ref{sec:degeneration}, a reducible scheme.

To extend this flat deformation to all of $X_0$ requires in addition
consistency of the wall structure in codimension two \cite[Def.\,3.2.1]{GHS}. This
condition amounts to saying that the restriction of $X_I^\circ$ to an affine
subset $U\subseteq X_0$ has enough regular functions to induce a flat
deformation $U_I$ of $U$. One then obtains a flat deformation of a scheme that
is projective over an affine scheme with central fiber $X_0$, and a
distinguished $S_I$-module basis $\vartheta_p^I$ of its homogeneous coordinate
ring $R_I$ for an ample line bundle coming with the construction. There is one
generator $\vartheta^I_p\in R_I$ for each contact order $p\in\Sigma(\shY)(\ZZ)$,
and these are compatible with changing $I$.

Another key result of \cite{GS22} is that $R_I$ agrees with the reduction modulo
$I$ of the intrinsic mirror ring $R$, for all $I$:
\[
R\equiv R_I\quad \mod I.
\]
This isomorphism maps the abstract module generator $\vartheta_p$ from
\eqref{Eqn: R as a module} to $\vartheta^I_p$ for all $I$.


\subsection{Geometry of the intrinsic mirror $\shX\to\Spec\CC\lfor P\rfor$ of
$(\shY,\shD)$}
\label{Subsect: Geometry of intrinsic mirror}

Let $R$ be the intrinsic mirror ring of our degenerating family $(\shY,\shD)$ of
smooth cubic curves embedded in $\PP^2$ and
\[
q:\shX=\Proj R\lra \Spec\CC\lfor P\rfor
\]
the corresponding mirror family. Looking at the integral generators of the cones
in $\Sigma(\shY)$, we see that $R$ is generated as a $\CC \lfor P\rfor$-algebra
by the five $\vartheta_p$ with $p$ the vertices $e_1,e_2,e_3$ of the bounded
cell $\ol\sigma_0\subset B$, the common ray generator $e_4$ of
$\sigma_1,\sigma_2,\sigma_3$, and the interior integral point $e_0$ of
$\sigma_0$. Writing $\vartheta_i=\vartheta_{e_i}$ we have
\[
\deg\vartheta_4=0,\quad \deg\vartheta_i=1\text{ for }i=0,1,2,3.
\]
The closed fiber $\shX_0=X_0=\Proj R/\maxid R$ of $q$ is defined by the
(generalized) Stanley-Reisner ring for $\P$,
\begin{equation}
\label{Eqn: Stanley-Reisner ring of shY}
R/\maxid R= \CC[\vartheta_0,\ldots,\vartheta_4]/
(\vartheta_1\vartheta_2\vartheta_3-\vartheta_0^3,\,\vartheta_4\vartheta_0),
\end{equation}
see \cite[\S2.1]{GHS}. Thus $\shX_0$ has irreducible components (I)~the
hypersurface $X_{\sigma_0}\simeq V(XYZ-U^3)\subset \PP^3$, the toric variety
with momentum polytope $\ol\sigma_0$ and three $A_2$-singularities, and
(II)~three copies of $X_{\sigma_i}=\PP^1\times \AA^1$, one for each of the
remaining maximal polytopes $\ol\sigma_i$, $i=1,2,3$ of $\P$ in Figure~\ref{Fig:
Sigma(shY)}.

Analyzing curves of low degree and using the uniqueness of consistent wall
structures \cite[Prop.\,4.1]{GS11}, one can show that up to a trivial
change\footnote{The only difference is that the singularities of the affine
structure on $B$ are moved from the interior of the bounded edges to the
vertices, see \cite{Gr20}.} the canonical wall structure agrees with the
algorithmically constructed wall structure from \cite{GS11,CPS}, see
\cite{Gr22a}. For the sequel, we only need the following statement.

\begin{lemma}
\label{Lem: Intrinsic mirror is algorithmic mirror}
All walls $\fop$ of the canonical wall structure of $(\shY,\shD)$ are disjoint
from $\Int\sigma_0$. Moreover, if $\delta:B\to \RR_{\ge0}$ denotes the integral
affine distance function from $\ol\sigma_0$ then for any outgoing contact order
$u_\fot$ of a wall type $\fot$ it holds
\[
\nabla_{u_\fot}\delta\ge 0
\]
for the directional derivative, with strict inequality iff the wall is not
contained in $\partial\sigma_0$.
\end{lemma}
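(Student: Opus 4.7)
The plan is to prove both assertions directly via an elementary non-negativity argument for the distance function $\delta$: since $\delta \geq 0$ on $B$, it cannot strictly decrease along the unbounded edge of a tropical curve without eventually becoming negative. Recall from \cite{GS22} that a wall $\fop_{\fot, A}$ in the canonical wall structure is associated to a wall type $\fot$, the combinatorial type of a family of tropical curves $h : \Gamma \to |\Sigma(\shY)|$ with a single unbounded puncture edge $e_{\out}$. The outgoing contact order $u_\fot$ is the primitive integral tangent vector along $h(e_{\out})$ pointing to infinity. By construction the wall is contained in a single cone $\sigma$ of $\Sigma(\shY)$, so in $B = g^{-1}(1)$ its trace lies in a single cell of $\P$ and $h(e_{\out})$ is a ray in direction $u_\fot$.

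For the monotonicity in assertion (2), I would consider $\delta \circ h$ restricted to $e_{\out}$. Since $\delta$ is integral affine linear on each cell of $\P$ and $h(e_{\out})$ lies in one cell, $\delta \circ h$ is linear on $e_{\out}$ with slope $\nabla_{u_\fot}\delta$. As $\delta \geq 0$ on $B$ and $e_{\out}$ is unbounded, a strictly negative slope would force $\delta \circ h \to -\infty$, a contradiction; hence $\nabla_{u_\fot}\delta \geq 0$. For strict inequality, suppose $\delta \circ h$ is constant equal to $c \geq 0$ on $e_{\out}$. If $c = 0$, then $h(e_{\out}) \subset \ol\sigma_0$; compactness of $\ol\sigma_0$ together with unboundedness of $e_{\out}$ forces $h(e_{\out}) \subset \partial \ol\sigma_0$, so $\fop_{\fot, A} \subset \partial \sigma_0$. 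If $c > 0$, then $h(e_{\out})$ lies in a level set of $\delta$ inside some asymptotic cell $\ol\sigma_i$, which by inspection of Figure~\ref{Fig: Asymptotic geometry} is a bounded transverse segment of $\ol\sigma_i$, contradicting unboundedness of $e_{\out}$.

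For assertion (1), if a wall $\fop_{\fot, A}$ were to meet $\Int \sigma_0$, its trace in $B$ would meet $\Int \ol\sigma_0$ and, being contained in a single cell, would lie entirely in $\ol\sigma_0$, again contradicting unboundedness of $e_{\out}$ together with compactness of $\ol\sigma_0$.

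The main subtlety in this argument is that $\delta$ is only defined up to the affine monodromy \eqref{Eqn: Total affine monodromy}. However, since each wall lies in a single cone of $\Sigma(\shY)$, the entire argument is local in a chart around the wall's trace, where $\delta$ is single-valued and piecewise linear, and the directional derivative $\nabla_{u_\fot}\delta$ is intrinsic to the integral affine structure. An alternative route avoiding this subtlety altogether would be to invoke the equivalence of the canonical wall structure with the algorithmically constructed wall structure of \cite{CPS,GS11,Gr22a} mentioned just before the lemma, in which the outward monotonicity of walls is built into the construction by propagating scatterings outward from $\partial \ol\sigma_0$.
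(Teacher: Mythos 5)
Your approach (monotonicity of $\delta$ along unbounded walls/legs) is genuinely different from the paper's. The paper applies the generalized balancing condition for punctured log maps \cite[Cor.\,2.30]{ACGS} to the divisor $D_4$, obtaining
\[
\deg f^*\big(\O_{\shY}(-D_4)\big) = -\nabla_{u_\fot}\delta,
\]
and then concludes at once from semi-ampleness of $D_4$, with the equality case $D_4\cdot C = 0$ occurring exactly for the exceptional curves of the small resolution, which are the ones whose walls sit on $\partial\sigma_0$. This identifies $\nabla_{u_\fot}\delta$ with an intersection number on $\shY$ rather than reasoning directly in $B$.

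Your argument has a genuine gap. The puncture leg $e_{\out}$ in the tropicalization of a single punctured map is a \emph{bounded} leg --- this is the defining feature of a punctured (negative contact order) marked point in \cite{ACGS}. It is the wall $\fop_{\fot,A}$, swept out by the endpoint of $e_{\out}$ as the tropical modulus grows, that is unbounded as a cone in $|\Sigma(\shY)|$. So the inference ``a strictly negative slope would force $\delta\circ h\to -\infty$'' does not apply to $h(e_{\out})$. If you instead apply it to the trace of the wall in $B$, you need to know that this trace is unbounded in direction $u_\fot$; but for a hypothetical wall contained in $\sigma_0$ the trace would lie in the compact $\ol\sigma_0$ and hence be bounded. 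This makes the argument for the first assertion circular: unboundedness is used to rule out walls in $\Int\sigma_0$, but walls in $\Int\sigma_0$ are precisely the ones that would fail to be unbounded, so this needs to be established independently. The paper's balancing argument sidesteps this entirely: the identity $\nabla_{u_\fot}\delta = \deg f^*\O_\shY(D_4)\ge 0$ holds regardless of where the wall sits, and the characterization of the equality locus is purely a statement about which curve classes pair trivially with $D_4$. Your closing alternative (invoking the comparison with the algorithmic wall structure of \cite{CPS,GS11,Gr22a}) is legitimate but imports the outward-pointing property as a black box, whereas the paper derives it from the geometry of $(\shY,\shD)$ in a way that also generalizes (as in Proposition~\ref{Prop: GG_m-action}).
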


\begin{proof}
The generalized balancing condition for a punctured stable map $f:C\to \shY$
over a log point \cite[Cor.\,2.30]{ACGS} implies
\[
\deg f^*\big(\O_{\shY}(-D_4)\big) = -\nabla_{u_\fot}\delta.
\]
The statement now follows by noting that $D_4$ is semi-ample, with $D_4\cdot
C=0$ for an irreducible curve $C\subset \shY_0$ iff $C$ is one of the
exceptional divisors, the curves indicated by little red arcs in
Figure~\ref{Fig: Y_0}.
\end{proof}

The theta function $\vartheta_p$ is defined as an element of the ring
$R_\fou=S_I[\sigma_\ZZ^\gp]$ for a chamber $\fou$ in a maximal cone $\sigma$ in
$\Sigma(\shY)$ by a sum over so-called \emph{broken lines} with initial
direction $-p$ and ending at a general, specified point $x\in\Int\fou$. A broken
line $\beta$ is a piecewise straight path in $|\Sigma(\shY)|$, contained in the
complement of the codimension two skeleton and carrying a monomial $c\cdot z^m$
on each straight line segment, with $m$ tangent to $\beta$ and pointing
backwards. So $m=p$ on the unique unbounded line segment of $\beta$. When
$\beta$ meets a wall $\fop$, one extends $\beta$ across the wall with one of the
summands in the expansion of $\theta_\fop(c z^m)$ into a sum of Laurent
monomials. We refer to \cite[\S3.1]{GHS} for the precise definition. In
particular, the exponent $m$ carried by $\beta$, hence the direction of $\beta$,
can only change at a wall by adding a multiple of one of the exponents appearing
in $f_\fop$. The contribution of each broken line to $\vartheta_p$ for the local
expression in $S_I[\sigma_\ZZ^\gp]$ is the monomial $c z^m$ at its endpoint
$x\in\Int\sigma$.

From the outward pointing nature of our wall structure (Lemma~\ref{Lem:
Intrinsic mirror is algorithmic mirror}), it is then easy to see that the only
broken lines contributing to $\vartheta_0,\ldots,\vartheta_3$ at a point close
to the barycentric ray of $\sigma_0$ are straight (no bend), while those
contributing to $\vartheta_4$ bend at most once when crossing from an unbounded
chamber into $\sigma_0$, by a primitive integral tangent vector of the crossed
facet of $\sigma_0$. The first relation in \eqref{Eqn: Stanley-Reisner ring of
shY} therefore holds to all orders, while the relation involving $\vartheta_4$
for the chosen symmetric resolution $\shY\to\ol\shY$ turns out to give the
Hori-Vafa mirror superpotential \cite[Cor.\,7.9]{CPS}.

\begin{proposition}
\label{Prop: Intrinsic mirror}
The intrinsic mirror of $(\shY,\shD)$, after the base change $\CC\lfor P\rfor\to
\CC\lfor t\rfor$ given by $[C]\mapsto [C]\cdot D_4$, equals
\[
\shX=\Proj \CC\lfor t\rfor[X,Y,Z,U,W]\big/\big(XYZ-U^3,WU-t\cdot(X+Y+Z)\big).
\]
Here $X=\vartheta_1$, $Y=\vartheta_2$, $Z=\vartheta_3$, $U=\vartheta_0$ are of
degree~$1$ and $W=\vartheta_4$ is of degree~$0$.
\end{proposition}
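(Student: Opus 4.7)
The plan is to identify $R$ with the stated quotient ring by combining three ingredients: a generating set coming from the integral generators of the cones in $\Sigma(\shY)$, two relations coming from the Stanley--Reisner structure of the central fiber, appropriately deformed, and a flatness argument showing no further relations appear. The degrees are immediate from $g(e_i)=1$ for $i=0,1,2,3$ and $g(e_4)=0$. Surjectivity of the natural map $\CC\lfor t\rfor[X,Y,Z,U,W]\to R$ sending $(X,Y,Z,U,W)\mapsto(\vartheta_1,\vartheta_2,\vartheta_3,\vartheta_0,\vartheta_4)$ follows by induction on the contact order $p$ using the multiplication formula \eqref{Eqn: multiplication of theta functions}, together with Lemma~\ref{Lem: Intrinsic mirror is algorithmic mirror}, which limits the broken lines that can reach any fixed basepoint.

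For the first relation, I would fix a generic basepoint $x$ close to the barycentric ray $\RR_{\ge 0}\cdot e_0$ of $\sigma_0$. By Lemma~\ref{Lem: Intrinsic mirror is algorithmic mirror} no walls meet $\Int\sigma_0$, so every broken line with initial exponent $e_i$ ($i=0,1,2,3$) ending at $x$ stays inside $\sigma_0$, remains straight, and contributes the single monomial $z^{e_i}$. The centroid identity $e_1+e_2+e_3=(0,0,3)=3e_0$ in the integral lattice of $\sigma_0$ then yields
\[
\vartheta_1\vartheta_2\vartheta_3 \;=\; z^{e_1+e_2+e_3} \;=\; z^{3e_0} \;=\; \vartheta_0^3
\]
to all orders in $t$.

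For the deformed relation $WU=t(X+Y+Z)$, recall that on the central fiber \eqref{Eqn: Stanley-Reisner ring of shY} gives $\vartheta_4\vartheta_0=0$. At the same basepoint $x$, $\vartheta_0$ still contributes only the straight broken line with monomial $z^{e_0}$, whereas each broken line for $\vartheta_4$ starts in one of the unbounded cells $\ol\sigma_i$ ($i=1,2,3$) with initial exponent $e_4$ and, by Lemma~\ref{Lem: Intrinsic mirror is algorithmic mirror}, bends at most once, necessarily upon crossing the slab $\fob_i\subset\rho_{0i}=\sigma_0\cap\sigma_i$ on its way to $x$. Expanding the slab-crossing map via \eqref{Eqn: theta_fop on monomials} and using the local model \eqref{Eqn: R_fob} together with the base change $[C]\mapsto[C]\cdot D_4$, the three once-bent contributions produce $\vartheta_0\vartheta_4=t(\vartheta_1+\vartheta_2+\vartheta_3)+O(t^2)$. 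The symmetry of the chosen resolution makes the three slab contributions permutable, and the vanishing of the higher-order correction then follows from the identification of the relation with the Hori--Vafa mirror superpotential in \cite[Cor.\,7.9]{CPS}.

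Finally, writing $R'=\CC\lfor t\rfor[X,Y,Z,U,W]/(XYZ-U^3,\,WU-t(X+Y+Z))$, the surjection $R'\twoheadrightarrow R$ reduces modulo $t$ to an isomorphism onto the Stanley--Reisner ring \eqref{Eqn: Stanley-Reisner ring of shY}. Since both $R'$ and $R$ are flat over $\CC\lfor t\rfor$ (the former by direct inspection of the hypersurface presentation, the latter by the main result of \cite{GS22}), Nakayama upgrades the surjection to an isomorphism. The principal obstacle is the third paragraph: pinning down the precise coefficient $t(X+Y+Z)$, including the vanishing of higher-order corrections in $t$. This requires detailed broken-line bookkeeping with the slab functions $f_{\fob_i}$ together with the input from \cite{CPS} identifying the result with Hori--Vafa.
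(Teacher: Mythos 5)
Your argument matches the paper's: it also places a basepoint near the barycentric ray of $\sigma_0$, invokes Lemma~\ref{Lem: Intrinsic mirror is algorithmic mirror} to conclude that broken lines for $\vartheta_0,\ldots,\vartheta_3$ are straight while those for $\vartheta_4$ bend at most once upon crossing a facet of $\sigma_0$, and then cites \cite[Cor.\,7.9]{CPS} to identify the second relation with the Hori--Vafa superpotential. The explicit surjectivity and flatness arguments you add at the end are not spelled out in the paper but are a natural completion of the same approach.
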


It is also not hard to see that the result does not depend on the choice of
resolution $\shY\to\ol\shY$.

\begin{remark}
\label{Rem: Base change E->0}
The natural base ring for the intrinsic mirror of $(\shY,\shD)$ is $\CC\lfor
P\rfor$ with $P=\NE(\shY)$. This differs from our ring by replacing the
$t$-coefficient in front of $X,Y,Z$ in the second relation by factors $t^{E_i}$
or $t^{E_i+E_j}$, depending on the choice of resolution $\shY\to\ol\shY$. The
base-change to $\CC\lfor t\rfor$ maps all these factors to $1$. The additional
parameters trivialized under this base change are irrelevant for the enumerative interpretation of the period
integral and are therefore disregarded.
\end{remark}

An important insight for our period computation is that the intrinsic mirror
ring has an additional $\ZZ$-grading by putting
\[
\deg X=\deg Y=\deg Z=\deg U=0,\quad \deg t=\deg W=1.
\]
This grading defines a $\GG_m$-action on $\shX\to \Spec\CC\lfor t\rfor$.

Such an action always exists on intrinsic mirrors of degenerations of Fano
manifolds with smooth anticanonical divisor:

\begin{proposition}
\label{Prop: GG_m-action}
Let $(\shY,\shD)\to S$ be a normal crossings degeneration over the germ of a
smooth curve $(S,0)$ with $\shD=\shY_0\cup\shE$ and $\shE$ relatively
ample, irreducible and anticanonical. Denote by $R/\CC\lfor t\rfor$ the
intrinsic mirror ring of $(\shY,\shD)$ from \cite{GS22} for the curve class map
$\NE(\shY)\to P=\NN$ given by pairing with $\shD$, by $B$ the associated
integral affine manifold and by $W$ the theta function defined by $\shE$.

Then there is a $\ZZ$-grading on $R$ with $\deg\vartheta_p=0$ for all
$p\in B(\ZZ)$ and
\[
\deg t= \deg W=1.
\]

\end{proposition}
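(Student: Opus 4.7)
The plan is to define the grading explicitly on the theta basis and verify homogeneity using tropical balancing. I will set
\[
\deg t := 1, \qquad \deg \vartheta_p := \langle p, \shE \rangle,
\]
where $\langle p, \shE\rangle \in \NN$ is the contact order of $p$ with $\shE$, i.e.\ the evaluation of $p \in \Hom(P_\sigma,\NN)$ on the class of $\shE$ in the monoid $P_\sigma$ from~\eqref{Eqn: P from divisors}, equal to zero when $\shE$ does not meet the stratum $\shY_\sigma$. By construction of the integral affine manifold $B$, integral points $p \in B(\ZZ)$ have no component along the $\shE$-ray, hence $\langle p, \shE\rangle = 0$ and $\deg \vartheta_p = 0$. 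For the primitive generator $p_\shE$ of the $\shE$-ray of $\Sigma(\shY)$, one has $\langle p_\shE, \shE\rangle = 1$, giving $\deg W = 1$ for $W = \vartheta_{p_\shE}$.

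The remaining task is to verify that the multiplication rule~\eqref{Eqn: multiplication of theta functions} is homogeneous: for each nonzero $N_{pqr}^A$, I need
\[
\langle p, \shE\rangle + \langle q, \shE\rangle \;=\; \langle r, \shE\rangle + A.
\]
Any such nonzero coefficient is supported on families of three-punctured stable log maps $f\colon C \to \shY$ with class $\alpha \in \NE(\shY)$ satisfying $\shD \cdot \alpha = A$ and punctures of contact orders $p, q, -r$ at $\shD$. Tropical balancing of contact orders at $\shE$, applied as in the proof of Lemma~\ref{Lem: Intrinsic mirror is algorithmic mirror} and ultimately coming from \cite[Cor.\,2.30]{ACGS}, gives
\[
\shE \cdot \alpha \;=\; \langle p, \shE\rangle + \langle q, \shE\rangle - \langle r, \shE\rangle,
\]
so the identity to prove reduces to $A = \shE \cdot \alpha$.

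The main obstacle, and the place where the degeneration hypothesis really enters, is to identify $A = \shD \cdot \alpha$ with $\shE \cdot \alpha$, equivalently to establish $\shY_0 \cdot \alpha = 0$ for every contributing class. I will deduce this from the natural $\NN$-grading on $R$ (cf.\ the beginning of Section~\ref{sec:degeneration}) given by $\deg_\Proj \vartheta_p := g(p)$ and $\deg_\Proj t := 0$, which exists because $\shX = \Proj R$ is projective over $\Spec \CC\lfor t\rfor$. Multiplicativity of that grading forces $g(p) + g(q) = g(r)$ whenever $N_{pqr}^A \neq 0$, and tropical balancing at $\shY_0$ identifies the difference $g(p) + g(q) - g(r)$ with $\shY_0 \cdot \alpha$, yielding $\shY_0 \cdot \alpha = 0$. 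Combining the two balancing identities gives the required homogeneity, so the new $\ZZ$-grading is well-defined, and the corresponding $\Gm$-action on $\shX$ over $\Spec \CC\lfor t\rfor$ follows at once.
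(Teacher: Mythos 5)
Your argument takes a genuinely different route from the paper's, and the two are worth comparing. The paper works entirely inside the formalism of homogeneous wall structures from \cite[Thm.\,4.4.3]{GHS}: it specifies the two homomorphisms $\delta_Q=\id$ and $\delta_B$ (evaluation at the integral affine distance function $\delta$), then verifies via the balancing condition \cite[Cor.\,2.30]{ACGS} that every canonical wall function $f_\fop$ is homogeneous of degree $0$, after which the grading on $R$ comes for free from the general theorem. You instead define the grading explicitly on the theta basis by $\deg\vartheta_p$ equal to the contact order of $p$ with $\shE$ and $\deg t=1$, and verify compatibility with the structure constants $N^A_{pqr}$ of \eqref{Eqn: multiplication of theta functions} directly, using balancing at $\shE$ plus the identity $\shY_0\cdot\alpha=0$. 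The essential enumerative input (the \cite{ACGS} balancing) is the same; your version trades the \cite{GHS} machinery for a hands-on check against the \cite{GS19} multiplication rule, which is more elementary but relies on the already-established $\NN$-grading.

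Two points to fix. First, the assertion that ``integral points $p\in B(\ZZ)$ have no component along the $\shE$-ray'' is false: $B=g^{-1}(1)\subset|\Sigma(\shY)|$ has unbounded cells $\ol\sigma_1,\ol\sigma_2,\ol\sigma_3$ parallel to $e_4$, so $B(\ZZ)$ contains points such as $e_1+e_4$ with nonzero $\shE$-component; under your grading $\vartheta_{e_1+e_4}$ then has degree~$1$, as indeed it must for homogeneity of $\vartheta_{e_1}\cdot W$. So your justification of the stated normalization is wrong, and the condition $\deg\vartheta_p=0$ should be read as applying to the theta functions with vanishing $\shE$-contact order (in particular the ring generators $X,Y,Z,U$ of Proposition~\ref{Prop: Intrinsic mirror}) -- which is what both your grading and the paper's actually deliver. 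Second, the detour through the $\Proj$-grading to obtain $\shY_0\cdot\alpha=0$ is correct but roundabout: every class $\alpha$ contributing to $N^A_{pqr}$ lies in the central fiber $\shY_0=g^{-1}(0)$, and since $\shY_0$ is a fiber of the flat map $g$ the line bundle $\O_\shY(\shY_0)|_{\shY_0}$ is trivial, giving $\shY_0\cdot\alpha=0$ with no appeal to a grading at all.
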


\begin{proof}
A natural $\ZZ$-grading on the homogenous coordinate ring defined by a wall
structure arises when the wall structure is homogeneous in the sense of
\cite[Thm.\,4.4.3]{GHS}. For a wall structure defined over $\CC\lfor Q\rfor$,
this homogeneity involves the definition of two homomorphisms of abelian groups
\[
\delta_Q: Q\lra \ZZ,\quad \delta_B: \PL(B)^*\lra \ZZ
\]
fulfilling a compatibility condition \cite[(4.8)]{GHS}. Here $\PL(B)$ is the
monoid of continuous maps $B\to \RR$ that are piecewise affine with integral
slopes on each cell of the polyhedral decomposition $\P$ of $B$. Explicitly,
denoting by $\Sigma(\shY)[1]$ the set of rays of $\Sigma(\shY)$, the dual
$\PL(B)^*$ is the quotient of $\ZZ^{\Sigma(\shY)[1]}$ by the subspace of tuples
pairing trivially with all piecewise linear functions. In our case of $Q=P=\NN$
we take $\delta_Q=\id$ and $\delta_B$ the evaluation at the PL-function $\delta$
from Lemma~\ref{Lem: Intrinsic mirror is algorithmic mirror} that vanishes on
all bounded cells of the polyhedral decomposition $\P$ of $B$ and has slope $1$
on the unbounded rays.

It remains to check that the wall functions from \eqref{Eqn: Canonical wall
function} are homogeneous of degree~$0$ for the degree of monomials defined from
$\delta_Q$ and $\delta_B$. Each wall function is a function in a monomial
$t^Az^{-u_\tau}$ for a curve class $A$ and outgoing contact order $u_\tau$ for a
type $\tau$ of a $1$-punctured map. In the notation of \cite{GHS}, this monomial
has exponent $m=(\ol m,m_Q)$ with $\ol m= -u_\tau$, $m_Q=A\in P=\NN$, and degree
\[
\deg(m) = \delta_Q(m_Q) +\delta_B(\nabla_{\ol m}).
\]
We have $\delta_Q(m_Q)=A\in\NN$, the intersection number $[C]\cdot\shE$ for the
class of a curve $C$ contributing to the wall. In the second summand,
$\nabla_{\ol m}\in\PL(B)^*$ is the directional derivative along $\ol m=
-u_\tau$, so $\delta_B(\nabla_{\ol m}) =\nabla_{\ol m}(\varphi)$. The balancing
condition \cite[Cor.\,2.30]{ACGS} applied with $s$ the section of $\ol\M_X^\gp$
defined by $\shE$ then shows
\[
\delta_B(\nabla_{\ol m})= -\nabla_{u_\tau}(\delta)= -[C]\cdot\shE.
\]
Hence $\deg(m)=0$ as required.
\end{proof}


\section{Period integrals in the intrinsic mirror family}
\label{sec:period}


\subsection{Analytification of the intrinsic mirror family}
\label{Subsect: Analytification}
A priori, $\shX$ is only a scheme of finite type over $\Spec\CC\lfor t\rfor$.
But the relations in Proposition~\ref{Prop: Intrinsic mirror} are indeed
polynomial. Note that polynomiality holds whenever $\shD$ supports an
ample divisor, as discussed at the beginning of \S\ref{sec:degeneration}.
Thus $\shX$ is the base change to $\Spec\CC\lfor t\rfor$ of a flat
scheme over $\AA^1$. Denote by
\[
\pi:\shX_\an \lra \CC,
\]
its analytification, the set of solutions of
\begin{equation}
\label{Eqn: equations for mirror}
XYZ=U^3,\ WU=t\cdot(X+Y+Z)
\end{equation}
in $\PP^3\times\CC^2$ with homogeneous coordinates $X,Y,Z,U$ for $\PP^3$ and
$W,t$ the standard coordinates on $\CC^2$.

Note that for a scheme of finite type over $\CC \lfor P\rfor$, such as $\Proj$ of
the mirror ring in general, one can always obtain an analytic approximation to
order $k$ by cutting off coefficients in the $(k+1)$-th power of the maximal
ideal $\maxid= \big(P\setminus\{0\}\big)\subset \CC\lfor P\rfor$. Such an
approximation would be sufficient for the following computation.

An important non-algebraic function for our computations is the monomial $w$ for
the outgoing primitive tangent vector $(1,0)$ in the asymptotic chart
Figure~\ref{Eqn: Total affine monodromy}, that is, the generator $e_4$ of the
ray $\sigma_1\cap\sigma_2\cap\sigma_3$. Recall from \S\ref{Subsect: Intrinsic
mirror construction} that the intrinsic mirror ring $R_I$ modulo $I\subset
\CC\lfor P\rfor$ was given as a ring of functions on a scheme covered by affine
open subschemes of the form $\Spec S_I[\sigma_\ZZ^\gp]$ for $\sigma$ a maximal
cone in $\Sigma(\shY)$ and $\Spec R^I_\fob$ for $\fob$ a wall contained in a
codimension one cone. For $\sigma$, $\fob$ intersecting the asymptotic chart
Figure~\ref{Eqn: Total affine monodromy}, all of these rings contain the
distinguished monomial $w=z^{(1,0)}$. Of course, $w$ is only invariant under
wall crossing automorphisms for walls $\fob$ containing $(1,0)$ in their tangent
space. We claim that nevertheless $w$ has a meaning on a large region of $X_I$, compatibly for all $I$.

For any $I\subseteq\maxid$ with $S_I=\CC[P]/I$ Artinian, there exists $k\in\NN$
with $I\subseteq kP$. Hence $t^p\in S_I$, $p\in P$, can be non-zero only for the
$p$ in the finite set $P\setminus kP$. This implies that for the computation of
$R_I$ only finitely many walls matter, and in turn that there is a cocompact
region in $\sigma_1\cup \sigma_2\cup \sigma_3$ only containing walls parallel to
$(1,0)$. The Zariski-open subset of $X_I=\Proj R_I$ covered by the spectra of
these asymptotic model rings is the complement of the complete irreducible
component $X_{\sigma_0}\subset |\shX_I|=\shX_0$. Thus $\shX_I$ contains a regular
function $w$ that restricts to $z^{(1,0)}$ on each of the model affine open
subsets.

But note that $w$ does not, or at least not obviously, lie in the intrinsic
mirror ring $R$, a finitely generated $\CC\lfor P\rfor$-algebra. In the case of
our degeneration $(\shY,\shD)$ of $(\PP^2,E)$, we will rather express $w^3/t^3$
in \S\ref{Subsect: Canonical coordinates} as an exponentiated integral of the
holomorphic $2$-form $\Omega_{\shX_\an}$ over a family of $2$-chains with
boundaries on fibers of $W$. This description breaks down, and in fact exhibits
multi-valued behavior, near the set of critical values $3\mu_3$, $\mu_3=\{1,e^{\pm 2\pi i/3}\}$. Thus there is at least a holomorphic function on
\[
\big\{ x\in \shX_\an\,\big|\, |W(x)|>3|\pi(x)|\big\}
\]
restricting to $w$ on the analytification of $\shX_I\setminus X_{\sigma_0}$ for
all $I$. In the following, we view $w$ as this holomorphic function. A similar
argument works in great generality.

Alternatively, one could invert the expansion
of $W$ from \cite[Thm.\,5.12]{CPS},
\begin{equation}
\label{Eqn: W=W(w,t)}
W=w+\sum_\ell \ell N_{1,\ell} w^{-\ell}t^{\ell+1} =
w\cdot\Big(1+\sum_\ell \ell N_{1,\ell}(t/w)^{\ell+1}\Big),
\end{equation}
over $\CC\lfor t\rfor$ and truncate to express $w$ as a holomorphic function in
$W$ and $t$ in an open set of $\shX_\an$ containing $\shX_0 \setminus
X_{\sigma_0}$, up to terms of order $t^{k+1}$. The coefficients $N_{1,\ell}$ are
logarithmic Gromov-Witten invariants with two marked points of contact orders
$1$ and $\ell$ with $\shE$, with the first contact point with $\shE$ specified
\cite[Thm.\,4.14]{GS22}. Note that the polarization by $\shE$ implies that any
summand in \eqref{Eqn: W=W(w,t)} is a constant multiple of $w^{-\ell}
t^{\ell+1}$. Since the polarizing divisor $\shE$ for $(\PP^2,E)$ is
$3$-divisible, we obtain the further restriction $\ell+1=3d$ for some $d$. See
\cite{Gr22a} for a detailed discussion of $W$ in the case of $(\PP^2,E)$.

The bracketed expression on the right-hand side of \eqref{Eqn: W=W(w,t)}
curiously is closely related to the mirror map of local $\PP^2$
\cite[Thm\,1.1]{GRZ}. See also \cite[Prop.\,4.1]{GRZZ} for an explicit
expression of the power series expansion of the inverse $w(W)$.

In the rest of this section we work in the analytic category and omit the
subscript ``$\an$''. Thus from now on, $\shX$ is a complex analytic space with
two holomorphic functions $\pi,W:\shX\to \CC$. The additional holomorphic
function $w$ is defined on an open subset of $\shX$ containing $\shX_0\setminus
X_{\sigma_0}$ and is a function in $W$ and $t$. We assume $W$ and $w$ are real
for the given real structure on $\shX$ and agree with the analytifications of
the regular functions denoted by the same symbols on $\shX_I$ and
$\shX_I\setminus X_{\sigma_0}$ for $I=(t^{k+1})$ and some fixed $k\gg 0$. These
properties are automatic in our case, and in general can be achieved by an
appropriate choice of analytic approximations. We are going to check identities
between holomorphic functions up to holomorphic multiples of $t^{k+1}$ and then
take the limit $k\to\infty$.

The construction described in \S\ref{Subsect: Intrinsic mirror construction}
also shows that the logarithmic canonical bundle of $\pi\colon \shX\to\CC$ is
trivial. Indeed, this follows by the theorem on formal functions and GAGA since
the intrinsic mirror construction comes with a distinguished section $\Omega$ of
the relative logarithmic canonical bundle. On an affine chart $\Spec
S_I[\sigma_\ZZ^\gp]$,
\[
S_I[\sigma_\ZZ^\gp] \cong \CC[t,z_1,z_2]/(t^{k+1}),
\]
we have $\Omega=\dlog z_1\wedge \dlog z_2$. Thus $\Omega$ is the reduction
modulo $I$ of the holomorphic family of holomorphic $2$-forms $\Omega_t$ on
$\shX$ with logarithmic poles along $\shE$ uniquely determined by the
normalization condition
\begin{equation}
\label{Eqn: normalization}
\Pi_0(t):=\frac{1}{(2\pi i)^2}\int_{F_t} \Omega_t = 1.
\end{equation}
Here $F_t$ is an SYZ-fiber in $\shX_t$, a family of $2$-cycles homologous in
$\shX_t$ to $|z_1|=|z_2|=\const$.


\subsection{Positive real locus}
\label{Subsect: positive real locus}

Intrinsic mirror rings $R$ by definition have coefficients in $\QQ$, and in
particular are defined over $\RR$. Thus intrinsic mirrors $\shX=\Proj R$ come
with a real locus $\shX^\RR$. The restriction to the central fiber $\shX_0$ is
the union of real loci of the toric irreducible components, hence a $2^n$-fold
cover of the union of momentum polyhedra $B=\bigcup_i \ol\sigma_i$. After
restricting $\shX\to \CC$ to an interval $(-\epsilon,\epsilon)\subset \CC$,
we may assume that the intersection with $\shX_0$ induces a bijection of
connected components of $\shX^\RR$ with the connected components of
$\shX_0^\RR$.

In our case, as in any case admitting a toric model in the sense of \cite{GHK},
the slab functions $f_\fob$ even have coefficients in $\NN$. We only need this
statement for the lowest $t$-order in each slab function, where it follows by a
direct computation. In fact, any stable map with class $A$ fulfilling $A\cdot
D_4=0$ is a multiple cover of one of the three exceptional curves in $\shY_0$;
these are known to lead to the three slabs covering the facets $\rho_i$ of
$\sigma_0$, with functions $1+ z^m$ and $m$ a primitive integral tangent vector
spanning the tangent space of $\ol\rho_i$.

The positivity of order zero slab functions implies that $\shX_0^\RR =
\shX_0\cap\shX^\RR$ has a distinguished connected component $\shX_0^>$ that on
each irreducible toric component $X_{\sigma_i}$ restricts to the positive real
locus, the closure of $\RR_{>0}^2\subseteq (\CC^*)^2$ in $X_{\sigma_i}$. Denote
by $\shX_t^>\subset \shX_t$ the intersection of the connected component of
$\shX^\RR$ containing $\shX_0^>$ with $\shX_t$.

\begin{lemma}
\label{Lem: Lefschetz-thimbles for W}
For each $t>0$ we have
\[
W\big(\shX_t^>\big)= [3t,\infty),
\]
with fibers homeomorphic to $S^1$ over $s\in(3t,\infty)$ and a point over $s=3t$.
\end{lemma}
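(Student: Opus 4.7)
The plan is to realize $\shX_t^>$ concretely in affine coordinates and then analyze $W$ by the arithmetic-geometric mean inequality together with convexity in logarithmic coordinates.

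First, I would observe that the positive real locus is entirely contained in the affine chart $U\neq 0$: if a point of $\shX_t^>$ had $U=0$, then the defining relation $WU=t(X+Y+Z)$ with $t>0$ would force $X+Y+Z=0$, and nonnegativity of $X,Y,Z$ would then force $X=Y=Z=0$, impossible in $\PP^3$. So I may normalize $U=1$, and the relation $XYZ=U^3=1$ lets me eliminate $Z=1/(XY)$. The resulting homeomorphism $\RR_{>0}^2\to \shX_t^>$, $(X,Y)\mapsto \big((X\!:\!Y\!:\!1/(XY)\!:\!1),\,t(X+Y+1/(XY))\big)$, identifies $W$ with the function $\tilde W(X,Y)=t\big(X+Y+1/(XY)\big)$.

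Second, AM-GM gives
\[
X+Y+\tfrac{1}{XY}\;\ge\; 3\sqrt[3]{X\cdot Y\cdot \tfrac{1}{XY}}\;=\;3,
\]
with equality iff $X=Y=1/(XY)$, i.e.\ $X=Y=Z=1$. Hence $\min_{\shX_t^>} W = 3t$, attained at a single point, namely $(1\!:\!1\!:\!1\!:\!1,\,3t)$.

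Third, to pin down the topology of the level sets $W^{-1}(s)$ for $s>3t$, I would pass to logarithmic coordinates $a=\log X$, $b=\log Y$, turning $\tilde W/t$ into $f(a,b)=e^a+e^b+e^{-a-b}$ on $\RR^2$. Its Hessian
\[
\begin{pmatrix} e^a+e^{-a-b} & e^{-a-b} \\ e^{-a-b} & e^b+e^{-a-b}\end{pmatrix}
\]
has positive diagonal entries and determinant $e^{a+b}+e^{-a}+e^{-b}>0$, so $f$ is strictly convex; it is also proper, since $f(a,b)\to\infty$ whenever $\|(a,b)\|\to\infty$. Consequently, for every $c>3$ the sublevel set $\{f\le c\}$ is a compact convex subset of $\RR^2$ with non-empty interior---hence a topological closed $2$-disk---so its boundary $\{f=c\}$, which corresponds to $W^{-1}(tc)$ under our identification, is a topological circle. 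Setting $s=tc$ yields the stated fiber structure for $s>3t$, while the $s=3t$ fiber is the single minimum already identified; by continuity and properness the image is then $[3t,\infty)$. The argument is essentially mechanical once the convexity of $f$ in log coordinates is observed, so no serious obstacle arises.
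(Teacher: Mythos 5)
Your proof is correct and starts from the same dehomogenization the paper uses: pass to the chart $U\neq 0$, identify $\shX_t^>$ with $\RR_{>0}^2$, and write $W=t(x+y+1/(xy))$ (in fact the paper's proof is exactly this one-line observation, after which it defers to the computation sketched in \S\ref{Subsect: Takahashi's mirror family} and a critical-point count). Where you diverge is in the mechanism for pinning down the fiber topology. The paper computes the critical locus of $W$ on $(\CC^*)^2$ --- $x=y$, $x^3=1$ --- and concludes from the uniqueness of the positive real critical point (at $x=y=1$, value $3t$) that $W$ has a Morse minimum on $\shX_t^>$, so the level sets above $3t$ are circles; this is the Lefschetz-thimble picture invoked throughout the paper. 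You instead pass to log coordinates, verify strict convexity via the Hessian and establish properness, so that every sublevel set $\{f\le c\}$ with $c>3$ is a compact convex planar region, hence a disk with circle boundary. Your convexity argument is slightly more elementary and self-contained (no appeal to Morse theory or to the earlier discussion), and additionally shows at once that \emph{all} the sublevel sets $\Gamma_{s,t}$ used later in the paper are topological disks. The paper's critical-point argument is shorter and connects more directly to the Lefschetz-thimble language used in \S\ref{Subsect: positive real locus} and \S\ref{sec:main-result}, but both are correct.

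One small remark: your initial observation that $\shX_t^>$ avoids $U=0$ is implicitly using that all four homogeneous coordinates $X,Y,Z,U$ are nonnegative on $\shX_t^>$; this is part of what the definition of the positive real locus via the toric components gives you, but it is worth being explicit about, since it is not a formal consequence of the defining equations alone.
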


\begin{proof}
Dehomogenizing \eqref{Eqn: equations for mirror} by $U$ shows that a dense set
of $\shX_t$ is isomorphic to $(\CC^*)^2$ with coordinates $x,y$, and
$W=t\cdot\big(x+y+1/(xy)\big)$. From this description the statement is immediate
by direct computation already done in \S\ref{Subsect: Takahashi's mirror
family}. The critical points of $W$ are at $x=y$, $x^3=1$, and hence $3t$ is the
only real critical value of $W$.
\end{proof}

The lemma shows that
\[
W^{-1}\big([3t,s]\big)\cap \shX_t^> =
W^{-1}\big((-\infty,s)\big)\cap \shX_t^>
\]
for $s\in (3t,\infty)$ is a Lefschetz-thimble for $W$ intersecting the elliptic
curve $W^{-1}(s)\cap \shX_t$ in its positive real locus, an $S^1$. Since $w$ is
a function of $W$ and $t$, and $W$ and $w$ agree on $\shX_0\setminus
X_{\sigma_0}$, we can also use the $w$-coordinate to parametrize this family of
Lefschetz thimbles:
\begin{equation}
\label{Eqn: Gamma(s,t)}
\Gamma_{s,t}:= w^{-1}\big((-\infty,s)\big)\cap \shX_t^>.
\end{equation}
We will see in \S\ref{Subsect: Canonical coordinates} that $w$ is a canonical
coordinate in this context. The parametrization of the Lefschetz thimbles in
terms of $w$ thus removes a mirror map from our statements.
The definition of $\Gamma_{s,t}$ makes sense for $t/s$ sufficiently small, and
in particular for $s=1$ and $t$ sufficiently small.

To understand the relation of $\Gamma_{s,t}$ with the integral affine manifold
$B$, recall that the momentum maps $X_{\sigma_i}\to\sigma_i$ defined by the
Fubini-Study metric for the monomial basis of sections of $\O(1)$ patch together
to define a degenerate momentum map
\begin{equation}
\label{Eqn: momentum map}
\mu: \shX_0\to B,
\end{equation}
see \cite[Prop.\,1.1]{RS}\footnote{The result in loc.cit.\ assumes $B$ bounded,
but it extends to the general case by adding $\sum_i |\vartheta_{m_i}|^2\cdot
m_i$ in the definition of $\mu_\sigma$ with $m_i$ generators of the monoid of
integral points in the recession cone of $\sigma$. In the present case this
means adding $|W|^2\cdot e_4$ on the unbounded cells.}. Since the positive real
locus of a polarized toric variety maps homeomorphically onto its momentum
polytope, $\mu$ induces a homeomorphism $\shX_0^>\to B$. Now by the explicit
description of $w$ in the affine charts,
\begin{equation}
\label{Eqn: mu(|w|=const)}
\mu(\partial \Gamma_{s,0}) = \mu(|w|=s\big)
\end{equation}
is a circle of three line segments parallel to the edges of $\sigma_0$, cf.\
Figure~\ref{Fig: betatrop} below. Thus $\mu$ induces a homeomorphism of
$\Gamma_{s,0}$ with the closed disk in $B_0$ enclosed by the union of these
three line segments.

The homeomorphism of $\shX^>$ with $B$ extends to small $t>0$ away from a
neighborhood $U\subset B$ of a codimension two locus in $B$, as one can see by
working with the Kato-Nakayama space of $\shX_0$ as a log space
\cite[\S7.1]{KNreal}. Moreover, away from a neighborhood of a codimension two
locus, the full real locus $\shX_t^\RR$ for $t>0$ small is an unbranched cover
of $B\setminus U$. The codimension two locus is the image under $\mu$ of the
zero locus of the slab functions, the center points of the edges of $\sigma_0$
in the case of the mirror of $(\shY,\shD)$. Under the assumption of the
positivity of the lowest order coefficients of the slab functions, $\shX_0^>$ is
disjoint from this zero locus. A local computation near this codimension two
locus shows that the homeomorphism $\shX_0^> \to B$ extends to a family of
homeomorphisms $\shX_t^>\to B$. In fact, the arguments of \cite{KNreal} using
the Kato-Nakayama space show that near points of $\shX_0$ away from the zero
locus of the slab function, the lift of the real locus to the Kato-Nakayama
space maps to $B$ by a local homeomorphism, and this local homeomorphism extends
to small $t$. Alternatively, one could argue by an explicit computation in local
holomorphic coordinates compatible with the real structure. In any case,
$\shX_t^>$ is homeomorphic to $\RR^2$ for all $t\ge 0$, and $\Gamma_{s,t}$ is
topologically a family of disks.

We orient $\Gamma_{s,t}$ by the orientation of $B$ displayed in Figure~\ref{Fig:
betatrop}. Note there is no canonical choice of orientation of $B$ as it depends
on a cyclic ordering of the three irreducible components of $\shX_0$ to determine the orientation of $\ol\sigma_0$.


\subsection{Integral over a tropical 1-chain and canonical coordinates}
\label{Subsect: Canonical coordinates}
We will need another family of 2-chains $\beta_{s,t}$ for small $t>0$, each
homeomorphic to a pair of pants fibering over a graph $\beta^{\text{trop}}_s$ in
$B$. Its construction is modeled after the construction of tropical 1-cycles in
\cite{RS}. The graph $\beta^{\text{trop}}_s$ features one trivalent vertex,
three bivalent and three univalent vertices. The univalent vertices
$v_1,v_2,v_3$ lie on the circle $\mu\big(|w|=s\big)$ from \eqref{Eqn:
mu(|w|=const)}, the trivalent vertex $v_0$ is contained in the central cell of
$B$, and the bivalent vertices lie on the edges of $\ol\sigma_0$ as
sketched.\footnote{The figure is simplified by showing the affine structure
compatible with $\mu$ at the central fiber of the full intrinsic mirror family.
The base change from Remark~\ref{Rem: Base change E->0} has the effect of moving
the singularities of the affine structure at the three vertices to the centers
of the edges of $\ol\sigma_0$, and $\beta_s^\trop$ may have to be deformed
accordingly.}
\begin{figure}[h]
\includegraphics[width=.4\textwidth]{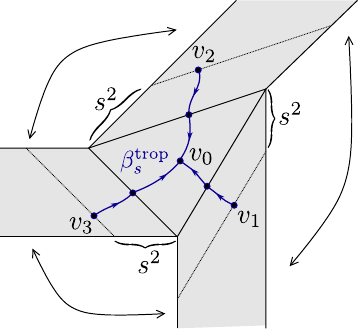}
\caption{The tropical 1-cycle $\beta^{\text{trop}}_s$.}
\label{Fig: betatrop}
\end{figure}
Along the path in $\beta_s^\trop$ connecting a one-valent vertex $v_i$ to
$v_0$, we endow $\beta_{s,t}$ with the parallel transport of the primitive
outward pointing vector $m=(1,0)$ in the asymptotic affine chart in
Figure~\ref{Fig: Asymptotic geometry}. The balancing condition at $v_0$ is
\[
(1,1)+(-1,0)+(0,-1)=(0,0).
\]
Thus $\beta_{s,t}$ with the edges oriented toward $v_0$ is a tropical
$1$-chain in the sense of \cite{RS}, that is, a singular $1$-chain on the
complement of the set of vertices of $B$ with values in the sheaf $\Lambda$ of
integral tangent vectors.

The conormal construction of \cite[\S2.3]{RS} now provides a singular $2$-chain
$\beta_{s,0}$ in $\shX_0$ with fibers $S^1$ over the interior of the edges of
$\beta_s^\trop$ that contract to points at $\partial\sigma_0$, boundary three
copies of $S^1$ in $w^{-1}(s)\cap \shX_0$ and with a triangle inserted over
$v_0$ to form a pair of pants. The property $\partial\beta_{s,t}\subset
w^{-1}(s)$ holds because the edges of $\beta_s^\trop$ in the unbounded cells
carry the asymptotic monomial.

Note that $w^{-1}(s)\cap \shX_0$ is a union of three copies of $\PP^1$ forming a
cycle; indeed, the restriction of $\pi:\shX\to\CC$ to $w=\const$ and $t$
sufficiently small is a base-changed Tate-curve, as is obvious from
\cite[Cor.\,5.13]{CPS}. Each of the three connected components of
$\partial\beta_{s,0}$ splits one of the $\PP^1\subset w^{-1}(s)\cap\shX_0$ into
two connected components.

\begin{figure}
\includegraphics[width=.3\textwidth]{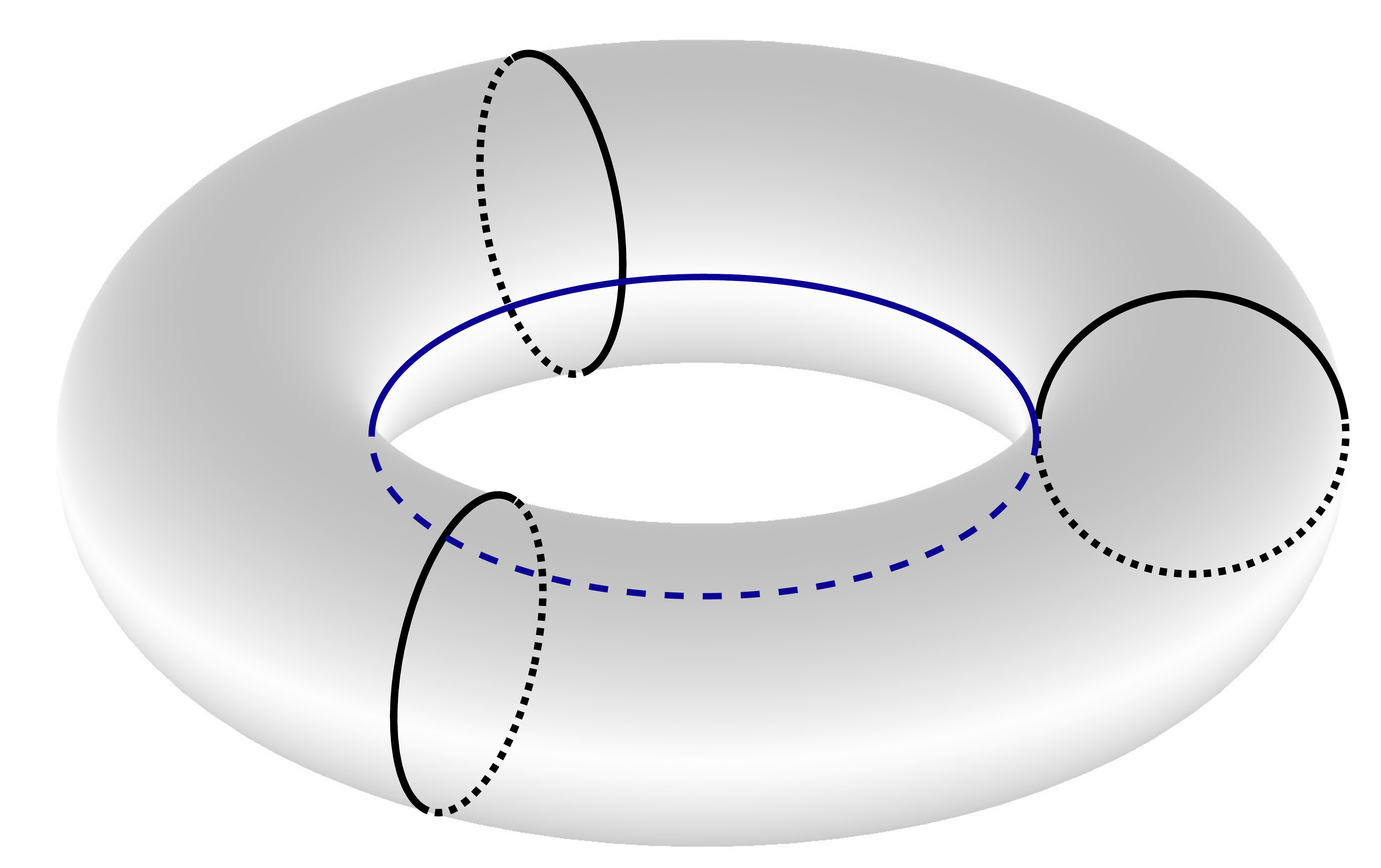}
\caption{Sketch of $\partial\beta_{s,t}$ and $\partial\Gamma_{s,t}$ as three
meridians and an equator in the elliptic curve $w^{-1}(s)\cap\shX_t$.}
\label{torus-fig}
\end{figure}

The construction in \cite{RS} also shows how $\beta_{s,0}$ extends to a
continuous family $\beta_{s,t}$ of $2$-chains in $\shX_t$, for $t$ in a
contractible neighborhood in $\CC^*$ of an interval $(0,t)$ with $t>0$ small.
To treat the boundary, the construction allows to add the property
\[
\partial\beta_{s,t}\subset w^{-1}(s).
\]
The construction of $\beta_{s,t}$ can be extended to any contractible open set
of $(s,t)\in\CC^*\times\CC$ such that $w^{-1}(s)\cap \shX_t$ is not singular if
$t\neq 0$, that is, with $\big(W(s,t)\big)^3\neq (3t)^3$. Figure~\ref{torus-fig}
shows $\partial\beta_{s,t}$ and $\partial\Gamma_{s,t}$ as curves inside the
elliptic curve $w^{-1}(s)\cap\shX_t$.

The period integral $\Pi_1$ of $\Omega_t$ over $\beta_{s,t}$ can be readily
computed by \cite{RS}:

\begin{proposition}
\label{prop:period1}
For any $s,t>0$ with $\beta_{s,t}$ defined we have
\begin{equation}
\label{Eqn: Pi_1(s,t)}
\Pi_1(s,t)=\frac{1}{2\pi i}\int_{\beta_{s,t}} \Omega_t
=  \pi i + 3(\log t - \log s).
\end{equation}
\end{proposition}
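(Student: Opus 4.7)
The plan is to apply the framework of \cite{RS} for computing periods of the holomorphic volume form over $2$-chains fibered as $S^1$-bundles over tropical $1$-chains, extended from cycles to chains as indicated in \S\ref{Subsect: Canonical coordinates}. In this framework, the period $\int_{\beta_{s,t}}\Omega_t$ decomposes as a sum of edge contributions (logarithmic terms from integrating along each edge of $\beta_s^{\trop}$) and a vertex contribution (a constant term from the trivalent interior vertex $v_0$).

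First I would analyze the three asymptotic legs of $\beta_{s,t}$. Each leg of $\beta_s^{\trop}$ runs from a univalent vertex $v_i\in\mu(|w|=s)$, through a bivalent vertex on $\partial\ol\sigma_0$, to the trivalent vertex $v_0$ in the central cell, with attached primitive integral tangent vector $(1,0)$ in the asymptotic affine chart of Figure~\ref{Fig: Asymptotic geometry}. By the conormal construction of \cite[\S2.3]{RS}, the $2$-chain $\beta_{s,t}$ fibers over each leg as an $S^1$-bundle on which the monomial $w=z^{(1,0)}$ has constant modulus while its argument rotates once around. Writing $\Omega_t=\dlog w\wedge\dlog u$ locally with $u$ a complementary holomorphic coordinate parametrising the leg, Fubini over the $S^1$-fiber replaces $\dlog w$ by $2\pi i$ and the remaining $\int\dlog u$ telescopes between endpoints.

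Next I would identify the endpoint values. At the outer end $w=s$ on the positive real branch, while at the inner end the leg terminates at $v_0$, where the relation $WU=t(X+Y+Z)$ of Proposition~\ref{Prop: Intrinsic mirror} together with the asymptotic expansion \eqref{Eqn: W=W(w,t)} identifies the effective inner value of $w$ as $t$, up to holomorphic corrections that vanish modulo $t^{k+1}$ once one passes to the canonical coordinates at $v_0$ discussed in \S\ref{Subsect: Analytification}. Each of the three legs therefore contributes $\log t-\log s$ to $\Pi_1$, totalling the term $3(\log t-\log s)$ in \eqref{Eqn: Pi_1(s,t)}.

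The main obstacle is the contribution of the interior trivalent vertex $v_0$, where the three $S^1$-fibers collapse and are capped off by a triangular $2$-cell. Following the model vertex computation of \cite{RS}, this cap yields a contribution of $\pi i$ rather than $0$ or $2\pi i$, precisely because the valency of $v_0$ is the odd number three: a half-twist around each collapsing cycle contributes a factor of $\pi i$, and the parity of the valency controls whether these cancel in pairs or leave a net $\pi i$. This vertex term is the enumerative origin of the sign $q=-t^3$ appearing in Takahashi's canonical coordinate \eqref{Eqn: canonical coordinate}, as noted in the discussion following it. Summing the three edge contributions with this vertex contribution, and verifying orientations and branch choices against the model computation in \cite{RS}, yields \eqref{Eqn: Pi_1(s,t)}.
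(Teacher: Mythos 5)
Your proposal follows the same route as the paper: reduce to the tropical period formula of~\cite{RS} for a conormal $2$-chain over $\beta_s^\trop$, and account term-by-term for the logarithmic edge contributions and a constant vertex contribution. The paper's proof simply invokes~\cite[\S3.6, Eq.~(3.17)]{RS} and reads off $\tfrac12$ from the trivalent vertex, $\log t$ from each of the three bivalent vertices, and $-\log s$ from each univalent vertex; your leg-by-leg telescoping regrouping yields the same $3(\log t-\log s)+\pi i$.

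Two details in your justification do not match the actual mechanism in~\cite{RS}, though they do not alter the answer. First, the $\log t$ per leg does not arise from $WU=t(X+Y+Z)$ ``identifying the effective inner value of $w$ as $t$'' at $v_0$: the asymptotic monomial $w$ simply does not extend as a coordinate across $\ol\sigma_0$, and the $\log t$ in~\cite{RS} is produced at the \emph{bivalent} vertex, where the edge crosses a slab and the kink of the multivalued piecewise-linear function contributes $t^{\kappa_\rho}$ in the gluing $uv=f_\fob\, t^{\kappa_\rho}$. Second, the $\pi i$ is not a ``half-twist per collapsing cycle with cancellation in pairs by parity of valency'': in~\cite{RS} the trivalent vertex contribution $\tfrac12$ comes from integrating over the inserted triangle that caps off the pair of pants, and equals $\tfrac12$ times the tropical multiplicity (a determinant of two of the three balanced weights, here equal to $1$). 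The odd valency enters only through the ultimate sign after exponentiation, as the paper also emphasizes; a $4$-valent vertex would give a different polygon contribution, not automatically $0$. Tightening these two points to the actual vertex-by-vertex formula of~\cite{RS} would bring your argument in line with the paper's.
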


\begin{proof}
The integral follows from the computation in \S3.6 of \cite{RS}, and notably
Equation~(3.17) with both the Ronkin function $\shR$, the gluing data
$s_{\sigma\!\!,\;\ul\rho}$ and $s_{\sigma'\!\!\!\!,\;\ul\rho}$ trivial. The integral thus becomes a
sum over simple contributions from the vertices of $\beta^{\text{trop}}_s$. Up
to the global factor ${2\pi i}$, the trivalent vertex contributes $\frac12$,
each bivalent vertex contributes $\log t$ and each univalent vertex contributes
$-\log s$.
\end{proof}

The usual definition of a canonical coordinate now yields,
\[
\exp\left(\frac{\Pi_1(s,t)}{\Pi_0(t)}\right)= \exp\big(\Pi_1(s,t)\big)
=-t^3/s^3.
\]
The first equality follows from the normalization property \eqref{Eqn:
normalization}. Restricting to the distinguished fiber $s=1$ exhibits $-t^3$ as
a canonical coordinate. But for fixed $t>0$, one could equally well view
$s^{-3}$ as a canonical coordinate in the codomain of $W$.

As we will see in the proof of Proposition~\ref{Prop: Takahashi's
conjecture via intrinsic periods}, the occurrence of the first summand $\pi i$
in $\Pi_1(s,t)$, which originates from the trivalent vertex of
$\beta^\trop_{s,t}$, is our explanation for the negative sign in front of $q$ in
\eqref{eq:takItwo}.

A similar construction can be done in general. The resulting period integral
gives different integral coefficients in Proposition~\ref{prop:period1}, but due
to the $\CC^*$-action on the $\shX$ acting with the same weights on $w$ and $t$
(Proposition~\ref{Prop: GG_m-action}) is always an affine function in
$\log(t)-\log(s)$.


\subsection{Main computation: Integration over positive real Lefschetz-thimbles}
\label{sec:main-result}

Our main result is the computation of the period integral
\begin{equation}
\label{Eqn: Pi_2(s,t)}
\Pi_2(s,t)=\int_{\Gamma_{s,t}}\Omega_t
\end{equation}
over the Lefschetz thimbles $\Gamma_{s,t}$ from \eqref{Eqn: Gamma(s,t)} for
cases such as the mirror family $\shX$ of $(\shY,\shD)$. We defined
$\Gamma_{s,t}$ as a subset of the positive real locus $\shX_t^>$, for
$s,t\in\RR_{>0}$ and $t/s$ small enough to fulfill $W(s,t)>3t$, the real
critical value of $W$. Similarly to the family of $2$-chains $\beta_{s,t}$ in
\S\ref{Subsect: Canonical coordinates}, there exists an extension of
$\Gamma_{s,t}$ as a continuous family of $2$-chains not only to positive real
$s,t$ with $t/s$ small, but to any contractible subset of $(s,t)\in (\CC^*)^2$
not containing a critical value of $W$ in the $(w,t)$-coordinates. In our
computations, we nevertheless restrict to $s,t>0$ real and then argue by unique
holomorphic continuation.

Note that for fixed $s$ the holomorphic continuation of the period integral
$\Pi_2(s,t)$ about $t=0$ is multi-valued due to the log poles of $\Omega$ near the
double locus of $\shX_0$. Indeed, such period integrals, for $s$ fixed, may also
involve constant multiples of $\log^2 t$ and products of $\log t$ with a
holomorphic function in $t$, as we explicitly saw in \eqref{eq:periods} in our
review of Takahashi's result.

Our computation works for any two-dimensional $\shX$ obtained via \cite{GHS}
from a real wall structure on an asymptotically cylindrical integral affine
manifold $B$ with integral polyhedral decomposition $\P$. The asymptotically
cylindrical condition means that all unbounded edges in $\P$ are parallel
\cite[Def.\,2.1]{CPS}. Thus such a $B$ has asymptotic charts similar to the one
shown in Figure~\ref{Fig: Asymptotic geometry}. We also assume that all
monomials in the slab and wall functions of unbounded walls are outgoing,
meaning they are polynomials in $w^{-1}$, for $w$ as above the monomial with
exponent the primitive outward pointing integral tangent vector of an unbounded
edge. This assumption is automatically fulfilled by the wall structures from
\cite{GS22} or \cite{GS11}.

The complement of a compact subset $K\subset B$ is homeomorphic to
$\RR_{>0}\times S^1$, with polyhedral decomposition induced by a polyhedral
decomposition of $S^1$ and the affine structure determined by two integers.
These are the integral affine circumference $\ell\in\NN\setminus\{0\}$ and $e\in
\ZZ$ from the linear part of the affine monodromy $\left(\begin{smallmatrix}
1&e\\0&1\end{smallmatrix}\right)$. We can thus write
\begin{equation}
\label{eq-iso-torus}
\textstyle
B\setminus K\simeq \big(\RR_{>0}\times [0,\ell]\big)\big/ \sim,
\end{equation}
where the equivalence relation identifies $\RR_{>0}\times\{0\}$ and
$\RR_{>0}\times\{\ell\}$, and the inclusion of $\RR_{>0}\times(0,\ell)$ is an
affine isomorphism onto the image. In the case of our degeneration of
$(\PP^2,E)$ we have $\ell=3$, $e=9$.

Non-horizontal walls in this asymptotic chart are bounded, see
Figure~\ref{fig:univcov}. By working only with unbounded chambers to construct
the $k$-th order smoothing $\shX_I\setminus X_{\sigma_0}$ of $\shX_0\setminus
X_{\sigma_0}$, $I=(t^{k+1})$, we can thus restrict to unbounded, horizontal
walls. For $k$ fixed we furthermore only consider the finite set of unbounded
walls $\fop$ and slabs $\fop$ that are non-zero modulo $t^{k+1}$, as explained
in \S\ref{Subsect: Intrinsic mirror construction}. This finite wall-structure
with all walls unbounded yields the same $\shX_I\setminus X_{\sigma_0}$, which
in turn agrees with the reduction modulo $t^{k+1}$ of the analytic family
$\shX\to \CC$, restricted to the complement of $X_{\sigma_0}$. We label the
slabs
\[
\fob_0,\ldots,\fob_m
\]
cyclically from bottom to top in a diagram with the asymptotic direction
horizontal to the right.

Denote further by
\begin{equation}
\label{Eqn: kink}
\textstyle
\kappa=\sum_\rho \kappa_\rho\in P=\NN
\end{equation}
the total kink of the
multivalued piecewise linear function $\varphi$ entering the construction of
\cite{GHS} at infinity. In the canonical wall structure, $\kappa_\rho$ is the
curve class associated to $\rho$. In our case we have $\kappa_\rho=3$
for all unbounded $\rho$, and $\kappa=3\cdot 3=9$.

The analytic function $w$ in \S\ref{Subsect: Analytification} restricts to the
unique monomial in
\begin{equation}
\label{Eqn: S_I[..]}
S_I[\sigma_\ZZ^\gp]\simeq\CC[t]/(t^{k+1})[w^{\pm1},x^{\pm1}]
\end{equation}
of degree~$0$ and associated tangent vector $(1,0)$.
Noting that each wall function of an unbounded wall is a Laurent polynomial in
$w^{-1}$ with coefficients in $\CC[t]/(t^{k+1})$, we can then write
\begin{equation}
\label{Eqn: f_out}
f_\out=\prod_\fop f_\fop \cdot\prod_\fob f_\fob \in\RR[t,w^{-1}]
\end{equation}
with all $t$-exponents at most $k$ and $f_\out\equiv 1$ modulo $t$.

Since all these wall and slab functions have a non-zero constant coefficient,
they have no zeros on any interval $[s_0,\infty)$, $s_0>0$, as long as $t>0$ is
sufficiently small. For $s_1>s_0>0$ and such $t$ we now define
\[
\Gamma_{t}(s_0,s_1)= \shX^>\cap w^{-1}\big([s_0,s_1]).
\]
Arguing with the degenerate momentum map $\mu: \shX_0\to B$ as in
\S\ref{Subsect: positive real locus}, it is then not hard to see that
$\Gamma_t(s_0,s_1)$ is a continuous family of cylinders. Moreover, letting
$\delta$ be the integral affine distance function on $B$ from the union of
bounded cells, $\mu$ identifies $\Gamma_0(s_0,s_1)$ with the annulus
\[
\delta^{-1}\big([s_0^2,s_1^2]\big)
\]
in $B$. Indeed, by our definition of $\mu$, the value of $|w|$ on $\mu^{-1}(a)$
equals $a^{1/2}$. Note also that if there exists a positive real Lefschetz
thimble $\Gamma_{s,t}$, as in the case of the mirror of $(\shY,\shD)$ from
\S\ref{Subsect: Maximal degeneration}, then
\begin{equation}
\label{Eqn: Lefschetz cylinder}
\Gamma_t(s_0,s_1)=\Gamma_{s_1,t}-\Gamma_{s_0,t}
\end{equation}
as a singular chain.

After requiring the monomial for $w$ to be given by $(1,0)$, the asymptotic
affine chart induced from \eqref{eq-iso-torus} becomes unique up to an affine
transformation with an integral translation and linear part
$\left(\begin{smallmatrix} 1 & k \\ 0& 1 \end{smallmatrix}\right)$ for
$k\in\ZZ$. We fix one such choice together with a choice of slab $\fob_0$ to
function as a radial cut. Parallel transport of the basis vector $(0,1)$ on the
contractible set $B\setminus\fob_0$ now provides us with a choice of monomials
$u,v$ in each model ring $R_{\fob_i}^I$ from \eqref{Eqn: R_fob}. The $u,v$
generize to monomials with opposite tangent vectors in $S_I[\sigma^\gp_\ZZ]$,
which for $i\neq 0$ we can take to be $x,x^{-1}$ in \eqref{Eqn: S_I[..]}. For
$i=0$ we take $u=x$ and then $v=x^{-1}w^{-e}$ due to the affine monodromy. Note
also that these $(\Spec R_{\fob_i})_\an$ cover $\shX_I\setminus X_{\sigma_0}$
since each other type of model ring $S_I[\sigma_\ZZ^\gp]$ is obtained by a
sequence of localizations \eqref{Eqn: R_fob -> Laurent}, \eqref{Eqn: R_fob ->
Laurent, other side} and isomorphisms \eqref{Eqn: theta_fop}.

Let $u_i$ be an analytic extension of $u\in R_{\fob_i}$ to an open subset
$U_i\subset\shX$, compatible with the real structure and such that
\[
\Omega_t=\dlog w\wedge \dlog u_i
\]
holds locally. Since $v=f_{\fob_i}\cdot t^{\kappa_i}/u$ in $R_{\fob_i}$, the
same formula with $u$ replaced by $u_i$ defines an analytic function $v_i$ on
$U_i$ with
\begin{equation}
\label{Eqn: u_iv_i}
u_iv_i= f_{\fob_i}(w,t)\cdot t^{\kappa_i}.
\end{equation}

We are now ready for the main period computation.

\begin{proposition}
\label{prop:period}
For $s_1>s_0>0$, and $t>0$ sufficiently small we have
\[
\int_{\Gamma_t(s_0,s_1)} \Omega_t = - \int_{s_0}^{s_1}
\Big(\log t^\kappa -\log s^e + \log f_\out(s,t)\Big)\dlog s +O(t^{k+1}).
\]
\end{proposition}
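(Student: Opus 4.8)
\emph{Plan of proof.} The plan is to view $\Gamma_t(s_0,s_1)$ as a circle bundle over $[s_0,s_1]$ via $w$ and to integrate $\Omega_t$ fibrewise. For $t>0$ small the fibre $\mathcal C_s:=w^{-1}(s)\cap\shX_t^{>0}$ over $s\in[s_0,s_1]$ is a smooth circle on which $w\equiv s$ (this is the local computation already used for Lemma~\ref{Lem: Lefschetz-thimbles for W}: dehomogenizing by $U$, $\mathcal C_s$ is the positive real locus of the smooth elliptic curve $w^{-1}(s)\cap\shX_t$), and $\Gamma_t(s_0,s_1)=\bigcup_{s\in[s_0,s_1]}\mathcal C_s$ is the family of cylinders introduced before the statement, oriented as in Figure~\ref{Fig: betatrop}. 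Decompose, as a singular $2$-chain, $\Gamma_t(s_0,s_1)=\sum_i A_i$, where $A_i$ is the part lying, under the degenerate momentum map $\mu$ of \eqref{Eqn: momentum map}, over the closed cell $\sigma_i$ of the cylindrical decomposition between the slabs $\fob_{i-1}$ and $\fob_i$; then $A_i\subset U_{i-1}\cap U_i$ and $A_i$ fibres over $[s_0,s_1]$ with fibre the arc of $\mathcal C_s$ running from the point $p_{i-1}(s):=\mathcal C_s\cap\fob_{i-1}$ to $p_i(s):=\mathcal C_s\cap\fob_i$ (the loop $\mathcal C_s$ crosses each slab once since $\mu(\mathcal C_s)=\{|w|=s\}$ goes once around the cylinder).

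On $U_i$ one has $\Omega_t=\dlog w\wedge\dlog u_i$, and parametrizing $A_i$ by $(s,\theta)$ with $w=s$ and $\theta$ an angular coordinate along the arc, the pull-back of $\Omega_t$ equals $\tfrac1s\,\partial_\theta(\log u_i)\,ds\wedge d\theta$: the relation $w\equiv s$ kills the $ds$-part of $\dlog u_i$, and kills the $dt$- and $dw$-discrepancies between the local expressions $\dlog w\wedge\dlog u_j$ on overlaps (on $\shX_t$ one has $dt=0$, and $\dlog w\wedge\dlog h(w,t)=\tfrac1w\tfrac{\partial_t h}{h}\,dw\wedge dt$). Integrating first in $\theta$ and summing gives
\[
\int_{\Gamma_t(s_0,s_1)}\Omega_t=\int_{s_0}^{s_1}\Big(\sum_i\big(\log u_i(p_i(s))-\log u_i(p_{i-1}(s))\big)\Big)\dlog s,
\]
all logarithms being real since $u_i>0$ on the positive real locus. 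The inner sum is a cyclic telescoping: on the overlap $U_{i-1}\cap U_i\supset\sigma_i$ one has $u_i=h_i(w,t)\,u_{i-1}$, where the factor $t^{\kappa_i}f_{\fob_i}$ comes from the slab relation $u_iv_i=f_{\fob_i}t^{\kappa_i}$ of \eqref{Eqn: u_iv_i} via \eqref{Eqn: R_fob -> Laurent}, and further factors $f_\fop^{\pm1}$ from the automorphisms $\theta_\fop$ of \eqref{Eqn: theta_fop} crossed in passing from $\fob_{i-1}$ to $\fob_i$; the cumulative affine monodromy over one full loop, with linear part $\left(\begin{smallmatrix} 1 & e \\ 0 & 1\end{smallmatrix}\right)$ as in \eqref{eq-iso-torus}, contributes an additional factor $w^{\mp e}$ which we absorb into the product. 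Substituting $\log u_i(p_{i-1})=\log h_i(s,t)+\log u_{i-1}(p_{i-1})$ and using that $\sum_i\log u_i(p_i)=\sum_i\log u_{i-1}(p_{i-1})$ cyclically, the undetermined endpoint values cancel and the inner sum becomes $-\log\prod_i h_i(s,t)=-\log\big(t^{\kappa}f_\out(s,t)\,s^{-e}\big)$ by the definitions \eqref{Eqn: kink} of $\kappa$ and \eqref{Eqn: f_out} of $f_\out$. This equals $-\big(\log t^\kappa-\log s^e+\log f_\out(s,t)\big)$, which yields the claimed formula. The $O(t^{k+1})$ is present because $\shX$, $\Omega_t$, $w$, the $u_i$ and $f_\out$ agree with the corresponding objects on $\shX_I$, $I=(t^{k+1})$, only modulo $t^{k+1}$; since $k$ was arbitrary this suffices.

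The main obstacle is the bookkeeping in the telescoping, i.e.\ verifying that $\prod_i h_i(w,t)=t^\kappa f_\out(w,t)\,w^{-e}$ with the correct signs: one must track, with a fixed orientation of $\mathcal C_s$, the wall-crossing automorphisms $\theta_\fop$ so that each unbounded wall function $f_\fop$ enters exactly once and to the first power, the slab gluings contributing $f_{\fob_i}$ and the kink $t^{\kappa_i}$, and the affine monodromy; and one must check that $u_i$ and the $h_i$ are positive on the positive real locus — which is where the positivity of the lowest-order slab coefficients from \S\ref{Subsect: positive real locus} enters — so that all logarithms, hence the final integrand, are real. Compared to this, the Fubini reduction and the error estimate are routine.
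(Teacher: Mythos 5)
Your proposal is correct and is essentially the same as the paper's proof: both reduce via Fubini to a fibrewise integration of $\dlog u_i$ over the arcs of the circle $\mathcal C_s$ cut out by the cylindrical decomposition, collecting slab, wall, kink, and affine-monodromy contributions into $\log\big(t^\kappa f_\out(w,t)\,w^{-e}\big)$. The paper avoids the explicit cyclic telescoping you set up by pinning the arc endpoints directly as $u_i$-values via the inequalities $v_i\le 1$ and $v_{i+1}\ge 1$, so that each $\int\dlog u_i$ is immediately $\log f_i^{-1}-\log\big(f_{\fob_i}t^{\kappa_i}\big)$; the remaining bookkeeping (the factor $w^{-e}$ in $f_0$, positivity of the slab functions on $\shX_t^>$, the $O(t^{k+1})$ truncation) is the same.
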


\begin{proof}
The integral is real analytic in $s_0,s_1$. By analytic continuation it
therefore suffices to prove the result for $s_0,s_1\gg0$.

Since the $U_i$ cover $\shX_0\setminus X_{\sigma_0}$, the $U_i$ cover
$\Gamma_t(s_0,s_1)$ for sufficiently small $t>0$. Moreover, $(w,u_i)$ or
$(w,v_i)$ from above provide real, oriented coordinate systems on
$\Gamma_t(s_0,s_1)\cap U_i$. By \eqref{Eqn: u_iv_i} we can also impose the
condition $v_i\le 1$, or equivalently
\begin{equation}
\label{Eqn: u_i>..}
u_i\ge f_{\fob_i}(w,t)\cdot t^{\kappa_i}
\end{equation}
on $U_i$ and still cover $\Gamma_t(s_0,s_1)$ for small $t$.

Noting that the $U_i\cap U_{i+1}$ with $U_{m+1}=U_0$ cover all but the double
locus of $\shX_0\setminus X_{\sigma_0}$, the $U_i\cap U_{i+1}$ also cover
$\Gamma_t(s_0,s_1)$ for $t$ sufficiently small. Denote by $\mathfrak P_i$ the
set of walls between the slabs $\fob_i$ and $\fob_{i+1}$. Then on $U_i\cap
U_{i+1}$, $i=0,\ldots,m-1$, the wall crossing automorphisms \eqref{Eqn:
theta_fop on monomials} provide the relation
\begin{equation}
\label{Eqn: u_iv_i+1}
\textstyle
u_i\cdot v_{i+1}= f_i(w,t)^{-1},\quad
f_i\equiv \prod_{\fop\in\mathfrak P_i}f_\fop\, \mod t^{k+1}.
\end{equation}
The inverse comes from the fact that the exponents for $u_i$ and $v_{i+1}$ both
point into the walls rather than away, in contrast to $u_i,v_i$ in the slab
relation \eqref{Eqn: u_iv_i}. Thus $v_{i+1}\ge 1$ if and only if
\begin{equation}
\label{Eqn: u_i<..}
u_i\le f_i(w,t)^{-1}.
\end{equation}
For $i=m$ the monodromy brings in an additional factor $w^{-e}$ and \eqref{Eqn:
u_iv_i+1} holds with $v_{m+1}=v_0$ and
\begin{equation}
\label{Eqn: u_mv_0}
\textstyle
f_0\equiv w^{-e}\cdot\prod_{\fop\in\mathfrak P_0}f_\fop\, \mod t^{k+1}.
\end{equation}
The inequalities \eqref{Eqn: u_i>..}, \eqref{Eqn: u_i<..} now provide a
decomposition of $\Gamma_t(s_0,s_1)$ for $t>0$ small into the domains
\[
s_0\le w\le s_1,\quad f_{\fob_i}(w,t)\cdot t^{\kappa_i}\le u_i\le f_i(w,t)^{-1},
\]
a subset of $U_i$, $i=0,\ldots,m$.

We obtain
\begin{equation}
\label{Eqn: main integral}
\begin{aligned}
\int_{\Gamma_t(s_0,s_1)} \Omega_t\
&= \sum_{i=0}^m \int_{s_0}^{s_1} \left(\int_{f_{\fob_i}(w,t)\cdot
t^{\kappa_i}}^{f_i^{-1}(w,t)} \dlog u_i \right)\dlog w\\
&= -\sum_{i=0}^m \int_{s_0}^{s_1} \Big(\log(t^{\kappa_i})+
\log f_{\fob_i}(w,t) +
\log f_i(w,t) \Big)\dlog w\\
&= -\int_{s_0}^{s_1}\Big[ \log t^\kappa -  \log w^e +
\log f_\out(w,t)\Big]\dlog w +O(t^{k+1}).
\end{aligned}
\end{equation}
The last equality uses $\kappa=\sum_i\kappa_i$ from \eqref{Eqn: kink} and the
definitions of $f_\out$ and $f_i$ in \eqref{Eqn: f_out}, \eqref{Eqn:
u_iv_i+1}, \eqref{Eqn: u_mv_0}.
\end{proof}

\begin{remark}
For an alternative proof of Proposition \ref{prop:period}, we could have used
\cite[Constr.\,5.5]{CPS} of an asymptotic polyhedral affine pseudomanifold with
asymptotic wall structure that includes walls with reduction modulo $t$ a
constant different from $1$, along with \cite{RS}. The computation then still
reduces to an integral over a degenerating family of elliptic curves.
\end{remark}

We are now ready to prove our main result. Recall that by \cite[(3.11)]{GS22} or
\cite[Thm.\,2]{Gr20}, the infinite product of all asymptotic wall functions is
given by
\begin{equation}
\label{eq:fout}
\log f_{\out}(w,t) = \sum_{d=1}^{\infty} \, 3d \, N_d \, w^{-3d} \, t^{3d}.
\end{equation}

\begin{theorem}
\label{Thm: main period}
Let $\shX\to \CC$ be the analytic intrinsic mirror of our maximal degeneration
$(\shY,\shD)\to\AA^1$ of $(\PP^2,E)$ and $\Gamma_{s,t}$ the positive real
Lefschetz thimble from \eqref{Eqn: Gamma(s,t)} with boundary on $w=s$. Then
\[
t\partial_t\,\Pi_2(s,t)= t\partial_t \int_{\Gamma_{s,t}}\Omega =
9\log (t/s)+\sum_{d\ge 1} 3d N_d (t/s)^{3d}.
\]
\end{theorem}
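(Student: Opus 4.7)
The strategy is to combine Proposition~\ref{prop:period} with the $\CC^*$-equivariance of Proposition~\ref{Prop: GG_m-action}. I would first compute $s\partial_s\Pi_2(s,t)$ from the period formula, and then use homogeneity to convert $s\partial_s$ into $-t\partial_t$.

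Concretely, for the intrinsic mirror of $(\shY,\shD)$ the asymptotic parameters are $e=9$ (by the monodromy matrix \eqref{Eqn: Total affine monodromy}) and $\kappa=\sum_\rho \kappa_\rho=3\cdot 3=9$, since each of the three unbounded rays of $\Sigma(\shY)$ supports a divisor $D_i$ with $D_4\cdot(E\cap D_i)=3$. Writing $\Gamma_t(s_0,s_1)=\Gamma_{s_1,t}-\Gamma_{s_0,t}$ as singular chains and differentiating the formula of Proposition~\ref{prop:period} with respect to $s_1$, then substituting \eqref{eq:fout}, I get
\[
s\partial_s\Pi_2(s,t) \;=\; -9\log(t/s) \;-\; \sum_{d\geq 1} 3d\, N_d\, (t/s)^{3d}
\]
modulo $t^{k+1}$; letting $k\to\infty$ yields the identity at the level of formal (and, in view of the convergence already established, analytic) power series in $t/s$.

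Next, I would invoke the $\CC^*$-action from Proposition~\ref{Prop: GG_m-action}, under which $t$ and $w$ scale with weight one while the theta functions $\vartheta_0,\vartheta_1,\vartheta_2,\vartheta_3$ have weight zero. On each affine chart $\Spec S_I[\sigma_\ZZ^\gp]$, $\Omega_t$ is of the form $\dlog z_1\wedge\dlog z_2$ in degree-zero monomial coordinates, so $\Omega_t$ is invariant; the SYZ-fiber defining the normalization $\Pi_0(t)=1$ likewise sits in a degree-zero orbit, so the normalization is preserved. Since the $\CC^*$-action sends $\Gamma_{s,t}$ to $\Gamma_{\lambda s,\lambda t}$, one concludes
\[
\Pi_2(\lambda s,\lambda t) \;=\; \Pi_2(s,t),
\]
so $\Pi_2$ is a function of the ratio $t/s$ alone.

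Applying the Euler vector field $s\partial_s+t\partial_t$ then gives $t\partial_t\Pi_2=-s\partial_s\Pi_2$, and combining with the first step yields the claimed formula. The main technical point is the second step: one must check that the $\CC^*$-action lifts to the analytic family $\shX\to\CC$ in a way compatible with the normalization and with the non-algebraic function $w$ from \S\ref{Subsect: Analytification}, so that the domain of definition of $w$, and hence the family of Lefschetz thimbles, is $\CC^*$-stable. This compatibility is built into the construction: on the chart where $w$ is defined via inversion of \eqref{Eqn: W=W(w,t)}, both sides are homogeneous of weight one, so the inversion produces a $\CC^*$-equivariant holomorphic function, and all objects entering the period integral transform consistently.
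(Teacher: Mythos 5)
Your proof is correct and follows essentially the same route as the paper's: compute $s\partial_s\Pi_2$ from Proposition~\ref{prop:period} after reducing to an annulus via $\Gamma_t(s_0,s_1)=\Gamma_{s_1,t}-\Gamma_{s_0,t}$, then invoke the $\CC^*$-equivariance of Proposition~\ref{Prop: GG_m-action} to get $\Pi_2(\lambda s,\lambda t)=\Pi_2(s,t)$, hence $t\partial_t\Pi_2=-s\partial_s\Pi_2$. The only cosmetic difference is that the paper establishes the $\CC^*$-invariance before computing $s\partial_s$; one small imprecision is that $w$ has weight one (not zero) under the $\GG_m$-action, but your conclusion that $\Omega_t=\dlog w\wedge\dlog x$ is invariant still holds because $\dlog$ is insensitive to scaling.
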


\begin{proof}
The $\CC^*$-action on $\shX$ of Proposition~\ref{Prop: GG_m-action} implies that
for all $\lambda\in\CC^*$,
\[
\Gamma_{\lambda s,\lambda t}=\lambda\cdot \Gamma_{s,t}.
\]
Here the multiplication by $\lambda$ on the right-hand side denotes the
action. Since $\lambda^*\Omega_{\lambda t}=\Omega_t$, we obtain
\[
\Pi_2(\lambda s,\lambda t)=\int_{\Gamma_{\lambda s,\lambda t}}\Omega_{\lambda t}=\int_{\Gamma_{s,t}}\Omega_t =\Pi_2(s,t).
\]
Taking the derivative with respect to $\lambda$ at $\lambda=1$ thus shows
\[
t\partial_t \int_{\Gamma_{s,t}}\Omega = - s\partial_s \int_{\Gamma_{s,t}}\Omega.
\]
Now for $s_0$ with $0<s_0<s$ and $t>0$ sufficiently small as in Proposition~\ref{prop:period}, we can decompose $\Gamma_{s,t}=\Gamma_{s_0,t}+\Gamma_t(s_0,s)$ as singular chains. Since the integral of $\Omega$ over $\Gamma_{s_0,t}$ does not vary with $s$, we conclude
\[
s\partial_s \,\Pi_2(s,t)= s\partial_s \int_{\Gamma_t(s_0,s)} \Omega
\]
for $s_0<s$ sufficiently close to $s$. The right-hand side can readily be
computed from Proposition~\ref{prop:period} while taking the limit $k\to\infty$
to give
\[
t\partial_t\,\Pi_2(s,t)= -s\partial_s \int_{\Gamma_t(s_0,s)}\Omega
= \log t^\kappa-\log s^e +\log f_\out(s,t).
\]
Noting that $\kappa=e=9$ and plugging in \eqref{eq:fout} gives the stated
formula.
\end{proof}

\noindent
\textbf{Proof of Theorem~\ref{thm:main}.}
The statement follows from Theorem~\ref{Thm:
main period} by setting $s=1$ and integration.
\qed

\begin{remark}
In higher dimensions, an analogous period computation can be done for the
intrinsic mirror of a normal crossings degeneration $(\shY,\shD)$ of a Fano
manifold $Y$ with smooth anticanonical divisor $D$. The positive real locus in
that case has to be replaced by a real one-parameter family of cycles induced by
a tropical $1$-cycle as in \cite{RS} in the asymptotic singular affine manifold
$B_\infty$ from \cite[Constr.\,5.5]{CPS}. At least in the case that this
asymptotic tropical $1$-cycle is the tropicalization of a family of curves
$\shC\subset \shD$, we expect the period integral computes the generating
function of logarithmic Gromov-Witten invariants in $Y$ intersecting $D$ only in
a point of $C=\shC\cap Y$.
\end{remark}


\subsection{Corollary: Takahashi's log mirror symmetry conjecture}
\label{subsec:match}

To deduce Takahashi's enumerative mirror conjecture \eqref{eq:takItwo} from our
periods, note that dehomogenizing the equations for $\shX$ in
Proposition~\ref{Prop: Intrinsic mirror} with respect to $U$ and setting
$x=X/U$, $y=Y/U$ leads to $W=t\cdot\big(x+y+\frac{1}{xy}\big)$. Thus $W=1$
agrees with the family $\check E_t^\circ$ in \eqref{eq:ecirchat} considered by
Takahashi. Next observe that in the canonical coordinate $q=-e^{I_1(Q)}$,
Takahashi's basis $I_0(Q)$, $I_1(Q)$, $I_2(Q)$ of solutions of the Picard-Fuchs
equation is the unique tuple of solutions obeying
\begin{align*}
I_0(q)&= 1\\
I_1(q)&= \log q\\
I_2(q)&= \frac{1}{2}\log^2 q+I_2^\hol(q).
\end{align*}

\begin{proposition}
\label{Prop: Takahashi's conjecture via intrinsic periods}
The periods $\Pi_1(t),\Pi_1(s,t),\Pi_2(s,t)$ from \eqref{Eqn: normalization},
\eqref{Eqn: Pi_1(s,t)}, \eqref{Eqn: Pi_2(s,t)} at $s=1$ fulfill
\begin{align*}
I_0(-t^3)&=\Pi_0(t)=1\\
I_1(-t^3)&=\Pi_1(1,t)=\pi i+3\log t\\
I_2(-t^3)&=\Pi_2(1,t)+c.
\end{align*}
for some $c\in\RR$.
\end{proposition}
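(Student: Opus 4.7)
The plan is to verify the three claimed identities in turn, using the period computations from Section~\ref{sec:period} together with the uniqueness characterization of Takahashi's basis $(I_0, I_1, I_2)$ stated just above the proposition.

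The first identity $I_0(-t^3) = \Pi_0(t) = 1$ is a tautology given the normalization \eqref{Eqn: normalization} and $I_0 \equiv 1$.

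For the second identity, I would substitute $s = 1$ into Proposition~\ref{prop:period1} to obtain $\Pi_1(1,t) = \pi i + 3\log t$. Interpreted with the branch $\log(-1) = \pi i$, this equals $\log(-t^3)$, so setting $q := -t^3$ we recognize $\Pi_1(1,t) = \log q = I_1(q)$. Crucially, the summand $\pi i$---which via Proposition~\ref{prop:period1} is traced back to the single trivalent vertex $v_0$ of the tropical $1$-cycle $\beta^\trop_s$ (whose odd valency produces the half-integer contribution $\tfrac12\cdot 2\pi i$)---is precisely the source of the negative sign in the canonical coordinate $q = -t^3$ observed by Takahashi.

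For the third identity, Theorem~\ref{thm:main} at $s = 1$ gives $\Pi_2(1,t) = \tfrac{1}{2}\log^2(t^3) + c + \sum_{d \geq 1} N_d\, t^{3d}$. The goal is to identify this, as a function of $q = -t^3$, with $I_2(q)$ up to an additive constant. Because $\Pi_2(1,t)$ is a period of $\Omega_t$ over a $2$-cycle in the mirror family, and the fibers of $\shX \to \CC$ over $W = 1$ recover (up to birational equivalence) Takahashi's family $\check{E}^\circ_Q$ of affine elliptic cubics, $\Pi_2(1,t)$ solves the Picard-Fuchs equation \eqref{eq:pf} in the variable $q$. Matching its $\tfrac{1}{2}\log^2$ leading behavior with the unique basis element $I_2$ of the same form, and using that the $\log q$-linear part has been pinned down in step~2 via $\Pi_1(1,t) = I_1(-t^3)$, forces $I_2(-t^3) - \Pi_2(1,t)$ to lie in $\CC\cdot I_0$, that is, to be a constant.

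The main obstacle is the third step. First, one has to verify that $\Pi_2(1,t)$ genuinely satisfies the Picard-Fuchs equation in $q$; this I would deduce from the birational identification of the intrinsic mirror's $W = 1$ locus with $\check{E}^\circ_Q$, so that $\Pi_2(1,t)$ is literally a period on that family. Second, one has to track branch choices carefully when expanding $\log^2(t^3)$ as a function of $\log q = 3\log t + \pi i$: any residual $\log q$-linear correction generated by this expansion must be correctly identified as a multiple of $I_1$ (already matched in step~2) and absorbed, together with the known constant $c$, into the ``$+c$'' appearing on the right-hand side of the proposition, rather than spoiling the claimed equality up to a constant.
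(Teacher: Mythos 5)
Your proposal follows essentially the same route as the paper's proof: tautology for $I_0$, direct substitution $s=1$ in Proposition~\ref{prop:period1} together with the appearance of $\pi i$ for $I_1$, and for $I_2$ the combination of Theorem~\ref{thm:main} with the fact that both $\Pi_2(1,t)$ and $I_2(-t^3)$ solve the Picard--Fuchs equation \eqref{eq:pf} and have matching leading $\log^2$ behaviour, so their difference lands in $\CC\cdot I_0$. Your remark that one must justify that $\Pi_2(1,t)$ is a Picard--Fuchs solution is a welcome elaboration of a step the paper leaves implicit, and your appeal to the identification of the $W=1$ locus of the mirror with Takahashi's family is indeed the intended mechanism.

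One correction in the last paragraph. A nonzero multiple of $I_1(-t^3)=\log(-t^3)$ appearing in the difference $I_2(-t^3)-\Pi_2(1,t)$ cannot be ``absorbed into the $+c$'': it is not a constant, so it would genuinely spoil the claimed equality. The correct resolution is not absorption but vanishing. With the branch dictated by \eqref{eq:takItwo}, the normalized leading term of $I_2$ is $\tfrac12\log^2(-q)$, not $\tfrac12\log^2 q$; substituting $q=-t^3$ gives $\tfrac12\log^2(t^3)$, which matches $\Pi_2(1,t)$ term-by-term with no residual $\log$-linear piece. Put differently, the $\pi i$ coming from the trivalent vertex via $\Pi_1(1,t)$ is exactly what converts $\log^2 q$ into $\log^2(-q)$, and once the branches are synchronized the $\log q$-coefficients of both sides are identically zero, so the Picard--Fuchs argument forces the difference to be an honest constant.
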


\begin{proof}
The first equality is \eqref{Eqn: normalization}. The second equality follows
from Proposition~\ref{prop:period1} by
\[
\exp\big(\Pi_1(1,t)\big)= \exp\big(\pi i+3\log t\big) =-t^3
\]
and the definition of $q$. Finally, Theorem~\ref{thm:main} shows that
$\Pi_2(1,t)$ and $I_2(-t^3)$ are both solutions of the Picard-Fuchs equation
\eqref{eq:pf} with the same coefficient of $\log^2(t)$, namely $1/2$. Since the
space of solutions with non-zero $\log^2(t)$-term and vanishing $\log(t)$- and
constant terms is one-dimensional, we obtain the third equality.
\end{proof}

\begin{corollary}
Takahashi's enumerative mirror conjecture \eqref{eq:takItwo} holds up to an
additive constant.
\end{corollary}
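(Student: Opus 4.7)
The plan is to combine Proposition~\ref{Prop: Takahashi's conjecture via intrinsic periods} with Theorem~\ref{thm:main} under the substitution $q=-t^3$. First I would set $q=-t^3$, so that $-q = t^3$ and hence $\log(-q)=\log(t^3)$ and $(-q)^d = t^{3d}$. Since both sides of Takahashi's identity \eqref{eq:takItwo} are real-analytic in $q$ near $q=0$ (in the appropriate branch of the logarithm), it suffices to verify the identity up to an additive constant after this substitution.

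Next I would apply the third line of Proposition~\ref{Prop: Takahashi's conjecture via intrinsic periods} to rewrite the left-hand side of \eqref{eq:takItwo} as
\[
I_2(-t^3) = \Pi_2(1,t) + c
\]
for some $c\in\RR$, and then invoke Theorem~\ref{thm:main} (with $s=1$) to substitute
\[
\Pi_2(1,t) = \tfrac{1}{2}\log^2(t^3) + c' + \sum_{d=1}^\infty N_d\, t^{3d}
\]
for the constant $c'$ appearing in \eqref{eq:period-int}. Translating the right-hand side back to the variable $q$ via $t^3=-q$ gives
\[
I_2(q) = \tfrac{1}{2}\log^2(-q) + \sum_{d=1}^\infty N_d\, (-q)^d + (c+c'),
\]
which is precisely \eqref{eq:takItwo} up to the additive constant $c+c'$.

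No obstacle remains because all the work was done in Theorem~\ref{thm:main} and Proposition~\ref{Prop: Takahashi's conjecture via intrinsic periods}: the former gives the $A$-model expansion of the Lefschetz thimble period, and the latter identifies this period with Takahashi's distinguished solution $I_2$ in the canonical coordinate $q=-e^{I_1(Q)}=-t^3$. The sign in $q=-t^3$, which is the delicate point of Takahashi's original formulation, was already accounted for by the $\pi i$ contribution of the trivalent vertex of $\beta^\trop_{s,t}$ in Proposition~\ref{prop:period1}; thus no further manipulation of monodromy or branches is needed at this final step. The corollary then follows by a direct substitution.
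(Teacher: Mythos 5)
Your argument matches the paper's own proof, which states that the corollary "follows readily from Proposition~\ref{Prop: Takahashi's conjecture via intrinsic periods} and Theorem~\ref{thm:main}"; you have simply spelled out the substitution $q=-t^3$ and the resulting cancellations that the paper leaves implicit. The approach and all intermediate steps are correct.
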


\begin{proof}
The statement follows readily from Proposition~\ref{Prop: Takahashi's conjecture
via intrinsic periods} and Theorem~\ref{thm:main}.
\end{proof}


\sloppy


\begin{thebibliography}{CCCCC}

\bibitem[AGIS]{AGIS} M.~Abouzaid, S.~Ganatra, H.~Iritani, N.~Sheridan:
  \emph{The Gamma and Strominger-Yau-Zaslow conjectures:
  A tropical approach to periods},
  Geom.\ Topol.~\textbf{24} (2020), 2547--2602.

\bibitem[AC]{AC} D.\ Abramovich, Q.\ Chen:
  \emph{Stable logarithmic maps to Deligne-Faltings pairs. II},
  Asian J.\ Math.~\textbf{18} (2014), 465--488.

\bibitem[ACGS]{ACGS} D.\ Abramovich, Q.\ Chen, M.\ Gross, B.\ Siebert:
  \emph{Punctured logarithmic maps},
  Mem.\ Eur.\ Math.\ Soc.~\textbf{15}, 2024.

\bibitem[A]{KNreal} H.~Arg\"uz:
  \emph{Real loci in (log) Calabi-Yau manifolds via Kato-Nakayama spaces of
  toric degenerations},
  Eur.\ J.\ Math.~\textbf{7} (2021), 869--930.

\bibitem[AKV]{AKV} M.\ Aganagic, A.\ Klemm, C.\ Vafa:
  \emph{Disk instantons, mirror Symmetry and the duality web},
  Z.\ Naturforsch.~A~\textbf{57} (2002), 1--28.

\bibitem[AV]{AV} M.\ Aganagic, C.\ Vafa:
  \emph{Mirror Symmetry, D-Branes and Counting Holomorphic Discs},
  arXiv:hep-th/0012041 (2000).
 
\bibitem[AD]{AD} M.\ Artebani, I.\ Dolgachev:
  \emph{The Hesse pencil of plane cubic curves},
  Enseign.\ Math.~\textbf{55} (2009), 235--273.

\bibitem[B]{B} V.\ Batyrev:
  \emph{Dual polyhedra and mirror symmetry for Calabi-Yau hypersurfaces in
  toric varieties},
  J.\ Algebr.\ Geom.~\textbf{3}, (1994) 493--535.

\bibitem[BB]{BB} V.\ Batyrev, L.\ Borisov:
  \emph{Mirror duality and string-theoretic Hodge numbers},
  Invent.\ Math.~\textbf{126} (1996), 183--203.

\bibitem[BH]{BH} P.\ Berglund, T.\; Hübsch:
  \emph{A generalized construction of mirror manifolds},
  Nuclear Phys.~B\textbf{393} (1993), 377--391.

\bibitem[B20]{B20} P.\ Bousseau:
  \emph{The quantum tropical vertex},\
  Geom.\ Topol.~{\bf 24} (2020), 1297--1379.

\bibitem[B22]{B22} P.\ Bousseau:
  \emph{Scattering diagrams, stability conditions, and coherent sheaves
  on $\PP^2$},
  J.\ Algebraic Geom.~{\bf 31} (2022), 593--686.

\bibitem[B23]{B23} P.\ Bousseau:
  \emph{A proof of N. Takahashi's conjecture for $(\PP^2,E)$ and a
  refined sheaves/Gromov-Witten correspondence}
  Duke Math.\ J.~{\bf 172} (2023), 2895--2955.

\bibitem[BBvG]{BBvG} P.\ Bousseau, A.\ Brini, M.\ van~Garrel:
  \emph{Stable maps to Looijenga pairs},
  Geom.\ Topol.~{\bf 28} (2024), no.~1, 393--496.

\bibitem[BS]{BS} A.\ Brini, Y.\ Schuler:
  \emph{Refined Gromov-Witten invariants},
  arXiv:2410.00118.

\bibitem[CLS]{CLS} P.~Candelas, M.~Lynker, R.~Schimmrigk:
  \emph{Calabi-Yau manifolds in weighted $\mathbf{P}_4$,}
  Nuclear Phys.~B\textbf{341} (1990), 383--402.

\bibitem[CPS]{CPS} M.\ Carl, M.\ Pumperla, B.\ Siebert:
  \emph{A tropical view on Landau-Ginzburg models},
  Acta Math.\ Sin.\ (Engl.\ Ser.)~\textbf{40} (2024), 329--382.

\bibitem[CLL]{CLL} K.\ Chan, S.-C.\ Lau, N.~C.\ Leung:
  \emph{SYZ mirror symmetry for toric Calabi-Yau manifolds},
  J. Differential Geom.~{\bf 90} (2012), no.~2, 177--250.

\bibitem[CLT]{CLT} K.\ Chan, S.-C.\ Lau, H.-H.\ Tseng:
  \emph{Enumerative meaning of mirror maps for toric Calabi-Yau manifolds},
  Adv.\ Math.~{\bf 244} (2013), 605--625.

\bibitem[Ch]{Ch} Q.\ Chen:
  \emph{Stable logarithmic maps to Deligne-Faltings pairs I},
  Ann. Math. (2) 180, No. 2, 455--521 (2014).

\bibitem[CKYZ]{CKYZ} T.-M.\ Chiang, A.\ Klemm, S.-T.\ Yau, E.\ Zaslow:
  \emph{Local mirror symmetry: Calculations and interpretations},
  Adv. Theor. Math. Phys. 3, No. 3, 495--565 (1999).

\bibitem[CdOGP]{COGP} P.\ Candelas, X.\ de la Ossa, P.\ Green, L.\ Parkes,
  \emph{A pair of Calabi--Yau manifolds as an exactly soluble
  superconformal theory},
  Nucl. Phys. B359 (1991), 21--74.

\bibitem[CvGKT]{CvGKT} J.\ Choi, M.\ van Garrel, S.\ Katz, N.\ Takahashi:
  \emph{Log BPS numbers of log Calabi--Yau surfaces},
  Trans. Am. Math. Soc.~\textbf{374} (2021), 687--732.

\bibitem[FTY]{FTY} H.\ Fan, H.-H.\ Tseng, F.\ You:
  \emph{Mirror Theorems for Root Stacks and Relative Pairs},
  Sel. Math., New Ser.~\textbf{25} (4) (2019), Paper No.~6, 33pp.

\bibitem[GP]{GP} S.\ Ganatra, D.\ Pomerleano:
  \emph{Symplectic cohomology rings of affine varieties in the topological
  limit},
  Geom.\ Funct.\ Anal.~\textbf{30} (2020), 334--456.

\bibitem[vGGR]{vGGR} M.\ van Garrel, T.\ Graber, H.\ Ruddat:
  \emph{Local Gromov-Witten invariants are log invariants},
  Adv. Math.~\textbf{350} (2019), 860--876.

\bibitem[vGNS]{vGNS} M.\ van Garrel, N.\ Nabijou, Y.\ Schuler:
  \emph{Gromov-Witten theory of bicyclic pairs},
  arXiv:2310.06058.

\bibitem[Ga]{Ga} A.\ Gathmann:
  \emph{Relative Gromov--Witten invariants and the mirror formula},
  Math. Ann.~\textbf{325} (2003), 393--412.
 
\bibitem[GZ]{GZ} T.\ Graber, E.\ Zaslow,
  \emph{Open-string Gromov-Witten invariants: calculations and a
  mirror ``theorem''},
  in: Orbifolds in mathematics and physics (Madison, WI, 2001),
  Contemp.\ Math.~\textbf{310}, AMS~2002, 107--121.

\bibitem[Gr20]{Gr20} T.\ Gräfnitz:
  \emph{Tropical correspondence for smooth del Pezzo log Calabi-Yau pairs},
  J.\ Algebraic Geom.~\textbf{31} (2022), 687--749.

\bibitem[Gr21]{Gr21} T.\ Gräfnitz:
  \emph{Sage code to compute scattering diagrams},
  available at \url{https://timgraefnitz.com/}.

\bibitem[Gr22a]{Gr22a} T.\ Gräfnitz:
  \emph{Theta functions, broken lines and 2-marked log Gromov-Witten invariants},
  arXiv:2204.12257.

\bibitem[GRZ]{GRZ} T.\ Gräfnitz, H.\ Ruddat, E.\ Zaslow:
  \emph{The proper Landau-Ginzburg potential is the open mirror map},
  Adv.\ Math.~\textbf{447} (2024).

\bibitem[GRZZ]{GRZZ} T.\ Gräfnitz, H.\ Ruddat, E.\ Zaslow, Benjamin Zhou:
  \emph{Enumerative Geometry of Quantum Periods},
  arXiv:2502.19408.

\bibitem[GrPl]{GrPl} B.~Greene and M.~Plesser:
  \emph{Duality in Calabi-Yau moduli space},
  Nuclear Phys.~B~\textbf{338} (1990), 15--37.

\bibitem[Gro]{Gro} M.\ Gross:
  \emph{Examples of special Lagrangian fibrations},
  in: Symplectic geometry and mirror symmetry (Seoul, 2000),
  81--109, World Sci. Publ., River Edge, NJ.

\bibitem[GHK]{GHK} M.\ Gross, P.\ Hacking, Paul, S.\ Keel:
  \emph{Mirror symmetry for log Calabi-Yau surfaces. I},
  Publ. Math., Inst. Hautes Étud. Sci. 122, 65-168 (2015).

\bibitem[GHS]{GHS} M.\ Gross, P.\ Hacking, B.\ Siebert:
  \emph{Theta functions on varieties with effective anti-canonical class},
  Memoirs of the American Mathematical Society 1367. AMS~2022.

\bibitem[GPS]{GPS} M.\ Gross, R.\ Pandharipande, B.\ Siebert:
  \emph{The tropical vertex},
  Duke Math. J.~\textbf{153} (2) (2010), 297--362.

\bibitem[GS11]{GS11} M.\ Gross, B.\ Siebert:
  \emph{From real affine geometry to complex geometry},
  Ann. Math. (2) 174, No. 3, 1301--1428 (2011).

\bibitem[GS13]{GS13} M.\ Gross, B.\ Siebert:
  \emph{Logarithmic Gromov-Witten invariants},
  J. Am. Math. Soc. 26, No. 2, 451--510 (2013).

\bibitem[GS14]{GS14} M.\ Gross, B.\ Siebert:
  \emph{Local mirror symmetry in the tropics},
  Proceedings of the International Congress of Mathematicians (ICM 2014),
  Seoul, Korea.

\bibitem[GS18]{GS18} M.\ Gross, B.\ Siebert:
  \emph{Intrinsic mirror symmetry and punctured Gromov-Witten invariants},
  Algebraic geometry: Salt Lake City 2015, 199--230.
  Proc.\ Sympos.\ Pure Math., 97.2, AMS, Providence, RI, 2018.

\bibitem[GS19]{GS19} M.\ Gross, B.\ Siebert:
  \emph{Intrinsic Mirror Symmetry},
  arXiv:1909.07649.

\bibitem[GS22]{GS22} M.\ Gross, B.\ Siebert:
  \emph{The canonical wall structure and intrinsic mirror symmetry},
  Invent. Math. 229, No. 3, 1101--1202 (2022).

\bibitem[I]{I1} H.\ Iritani,
  \emph{Mirror symmetric Gamma conjecture for Fano and Calabi-Yau manifolds},
  arXiv:2307.15940.

\bibitem[J]{J0} S.\ Johnston:
  \emph{Comparison of non-archimedean and logarithmic mirror constructions
  via the Frobenius structure theorem},
  arXiv:2204.00940.

\bibitem[KKMS]{KKMS} G.~Kempf, F.~Knudsen, D.~Mumford, B.~Saint-Donat:
  \emph{Toroidal Embeddings I},
  Springer, LNM~339, 1973.
 
\bibitem[KY23]{KY23} S.\ Keel, T.\ Yu:
  \emph{The Frobenius structure theorem for affine log Calabi-Yau varieties
  containing a torus},
  Ann.\ of Math.\textbf{198} (2023), 419--536.

\bibitem[KY24]{KY24} S.\ Keel, T.\ Yu:
  \emph{Log Calabi-Yau mirror symmetry and non-archimedean disks},
  arXiv:2411.04067.

\bibitem[L]{L} S.-C.\ Lau,
  \emph{Gross-Siebert’s slab functions and open GW invariants for toric
  Calabi-Yau manifolds},
  Math. Res. Lett. 22, No. 3, 881--898 (2015).

\bibitem[RS]{RS} H.\ Ruddat, B.\ Siebert:
  \emph{Period integrals from wall structures via tropical cycles, canonical
  coordinates in mirror symmetry and analyticity of toric degenerations},
  Publ.\ Math.\ Inst.\ Hautes \'{E}tudes Sci.~\textbf{132} (2020), 1--82.
 
\bibitem[Se]{Seidel} P.\ Seidel:
  \emph{A biased view of symplectic cohomology},
  Current developments in mathematics, 2006, 211--253, Int. Press~2008.

\bibitem[T96]{T96} N.\ Takahashi:
  \emph{Curves in the complement of a smooth plane cubic whose normalizations
  are $\mathbb{A}^1$},
  arXiv:alg-geom/9605007.

\bibitem[T01]{T01} N.\ Takahashi:
  \emph{Log mirror symmetry and local mirror symmetry},
  Comm.\ Math.\ Phys.~\textbf{220} (2) (2001), 293--299.

\bibitem[TY]{TY} H.-H.\ Tseng, F.\ You:
  \emph{A mirror theorem for multi-root stacks and applications},
  Sel. Math. New Ser. 29, 6 (2023).

\bibitem[W]{W} Y.\ Wang:
  \emph{Gross--Siebert intrinsic mirror ring for smooth log Calabi--Yau pairs},
  arXiv:2209.15365.

\bibitem[Y]{You} F.\ You:
  \emph{The proper Landau--Ginzburg potential, intrinsic mirror symmetry and
  the relative mirror map},
  Commun.\ Math.\ Phys.~\textbf{405}, Paper No.~79, 44pp.\ (2024).

\end{thebibliography}
\end{document}